\def\newaliasedtheorem#1[#2]#3{
  \newaliascnt{#1@alt}{#2}
  \newtheorem{#1}[#1@alt]{#3}
  \expandafter\newcommand\csname #1@altname\endcsname{#3}
}
\theoremstyle{plain}
\newtheorem{theorem}{Theorem}[section]
\theoremstyle{remark}
\theoremstyle{definition}
\theoremstyle{remark}
\numberwithin{equation}{section}
\def\eps{\varepsilon}
\def\R{\mathbb R}
\def\N{{\mathbb N}}% nonnegative integers
\DeclareMathOperator{\dv}{div}
\DeclareMathOperator{\Ker}{Ker}
\DeclareMathOperator{\rank}{rank}
\DeclareMathOperator{\loc}{loc}
\DeclareMathOperator{\Sym}{Sym}
\DeclareMathOperator{\tr}{tr}
\DeclareMathOperator{\id}{id}
\DeclareMathOperator{\diag}{diag}
\DeclareMathOperator{\tra}{trail}
\begin{document}

\title[Improved regularity of second derivatives for subharmonic functions]{Improved regularity of second derivatives for subharmonic functions}

\author{Xavier Fern\'andez-Real}
\address{EPFL SB, Station 8, CH-1015 Lausanne, Switzerland}
\email{xavier.fernandez-real@epfl.ch}

\author{Riccardo Tione}
\address{EPFL SB, Station 8, CH-1015 Lausanne, Switzerland}
\email{riccardo.tione@epfl.ch}

\keywords{Subharmonic functions; $\mathcal{A}$-free measures; Convex integration; Obstacle problem.}

\begin{abstract}
In this note, we prove that if a subharmonic function $\Delta u\ge 0$ has pure second derivatives $\partial_{ii} u$ that are signed measures, then their negative part $(\partial_{ii} u)_-$ belongs to $L^1$ (in particular, it is not singular). We then show that this improvement of regularity cannot be upgraded to $L^p$ for any $p > 1$. We finally relate this problem to a natural question on the one-sided regularity of solutions to the obstacle problem with rough obstacles. 
\end{abstract}

\maketitle

\section{Introduction}

Let us consider the following vague question:
\[
\text{If $u$ is subharmonic, $\Delta u\ge 0$, can it happen that $\partial_{ii} u = -\delta_0$?}
\]

 More generally, we consider the following problem, where we denote by $\mathcal{M}$ the space of locally finite (signed) Radon measures.  
\\[0.5cm]
\noindent {\bf Problem:} Let $u$ be subharmonic in the distributional sense, and let us assume that $D^2 u \in \mathcal{M}$. Is it then true that  $(D^2 u)_-\in L^p_{\rm loc}$ for some $p \ge 1$ (in particular, it has no singular part)?
\vspace{0.5cm}

In this note we discuss the validity of the previous statement. In particular, we show that if second derivatives of a subharmonic function $u$ are Radon measures, then their negative part is in $L^1$. We then provide counterexamples to show that, in general, it is not in $L^p$ for any $p > 1$.

The problem is motivated by a question in free boundary problems, on the one-sided regularity of solutions to the obstacle problem with rough obstacles. See Section~\ref{sec.obst} for a contextualization in that setting.  

Our main results can be summarized by the following statement:

\begin{theorem}
\label{thm.main}
Let $\Omega \subset \R^n$ be open, and let $u \in L^1_{\loc}(\Omega)$ be subharmonic, i.e. $\Delta u \ge 0$ in the sense of distributions. If $\partial_{ii}u$ are (locally) finite signed measures for all $1 \le i \le n$, then their singular part (with respect to the Lebesgue measure) is a positive measure, for all $i$. In particular,
\[
(\partial_{ii}u)_- \in L^1_{\loc}(\Omega).
\]
Moreover, this result is sharp, in the sense that there exists a subharmonic and Lipschitz $u: \Omega \to \R$ such that $u\in W^{2,1}(\Omega)$, but $(\partial_{ii}u)_{-} \notin L^p$, for any $1\le i \le n$, and any $p > 1$. 
\end{theorem}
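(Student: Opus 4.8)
The plan is to split the statement into two independent parts: the positivity of the singular part of $\partial_{ii}u$ (hence $(\partial_{ii}u)_-\in L^1_{\loc}$), and the sharpness via an explicit counterexample.

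For the first part, the key observation is that $\Delta u = \sum_i \partial_{ii}u \ge 0$ as a measure, while each $\partial_{ii}u$ is a signed measure. Writing the Radon–Nikodym decomposition $\partial_{ii}u = f_i\,\mathcal L^n + \mu_i^s$ with $\mu_i^s$ singular, we get $\sum_i \mu_i^s = (\Delta u)^s \ge 0$. This alone does not force each $\mu_i^s$ to be nonnegative, so the heart of the argument must be a structural constraint on the singular parts of pure second derivatives. I would use the fact that the matrix of distributions $D^2 u$ is curl-free in the appropriate sense — i.e. it is an $\mathcal A$-free measure for the operator encoding $\partial_k(\partial_{ij}u) = \partial_i(\partial_{kj}u)$ — and invoke the known rigidity of $\mathcal A$-free measures (in the spirit of De Philippis–Rindler): the polar (the Radon–Nikodym derivative of the singular part with respect to its total variation) must lie, $|\mu^s|$-a.e., in the wave cone $\Lambda_{\mathcal A}$ associated with this operator. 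For the Hessian operator this wave cone consists precisely of rank-one symmetric matrices $a\otimes a$. Since a rank-one symmetric matrix $a\otimes a$ has all diagonal entries $a_i^2\ge 0$, the singular part of each $\partial_{ii}u$ is a nonnegative measure, which is exactly the claim; and then $(\partial_{ii}u)_- $ is absolutely continuous, hence in $L^1_{\loc}$ since $\partial_{ii}u$ is a finite measure. I expect this rigidity step — correctly identifying the wave cone for the Hessian operator and checking the hypotheses of the $\mathcal A$-free rigidity theorem apply to a merely subharmonic $u$ — to be the main obstacle, and one may need to mollify $u$ and pass to the limit, or argue locally, to make this rigorous.

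For the sharpness part, I would build $u$ by convex integration / an explicit construction so that $D^2 u$ is a bounded measure whose diagonal entries $\partial_{ii}u$ have no singular part (so $u\in W^{2,1}$) yet whose negative parts $(\partial_{ii}u)_-$ concentrate badly on sets of small measure, ruling out $L^p$ for $p>1$. A natural model is to take $n=2$ and prescribe $\partial_{11}u = \partial_{22}u = g$, $\partial_{12}u = h$ with $g,h\in L^1\setminus\bigcup_{p>1}L^p$ and $g\pm h\ge 0$ a.e. (so $\Delta u = 2g\ge 0$ while $g$ changes nothing about subharmonicity but $g$ can be chosen unbounded below in the $L^p$ sense); the constraint $g\pm h\ge 0$ forces $|h|\le g$, and one checks the compatibility condition $\partial_1(\partial_{12}u)=\partial_2(\partial_{11}u)$, i.e. $\partial_1 h = \partial_2 g$, which is solvable by choosing $g,h$ as suitable derivatives of a single potential — equivalently, picking $v$ with $\partial_{22}v=\partial_{11}v$ and setting things up so that $u$ is Lipschitz. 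Concretely, taking $g = \partial_{11}w$ for a well-chosen $w$ and adjusting, one arranges $u$ Lipschitz (bounded first derivatives) with $\partial_{ii}u\in L^1$ but $(\partial_{ii}u)_-\notin L^p$; the explicit profile can be assembled from a lacunary series or from a self-similar dyadic construction where at scale $2^{-k}$ one places a correction supported on a set of measure $\sim 2^{-\alpha k}$ with amplitude tuned so that the $L^1$ norm is summable but the $L^p$ norm is not. Verifying that all second derivatives are genuinely measures (not just distributions) and that $u$ is globally Lipschitz on $\Omega$ is the delicate bookkeeping here.
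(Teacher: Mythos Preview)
Your proposal has genuine gaps in both halves.

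\textbf{Part 1 (regularity).} You propose to apply the De Philippis--Rindler rigidity to the full Hessian $D^2u$, viewed as an $\mathcal A$-free measure for the curl-type operator, concluding that the polar of its singular part is rank-one symmetric. This is exactly the content of the paper's Proposition~\ref{BV}, and it works \emph{if $D^2u$ is a measure}. But the hypothesis of Theorem~\ref{thm.main} is weaker: only the pure derivatives $\partial_{ii}u$ are assumed to be measures, and the paper stresses (citing \cite{CFM}) that this does \emph{not} force the mixed derivatives $\partial_{ij}u$ to be measures, even when $\Delta u\ge 0$. Hence you cannot feed $D^2u$ into the $\mathcal A$-free theorem. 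The paper's fix (Proposition~\ref{deri}) is to apply De Philippis--Rindler not to $D^2u$ but to the \emph{vector} $\mu=(\partial_{11}u,\dots,\partial_{nn}u)\in(\mathcal M(\Omega))^n$, which \emph{is} a measure by hypothesis and is annihilated by the second-order operator $\mathcal A(v)_{ij}=\partial_{ii}v_j-\partial_{jj}v_i$ (encoding $\partial_{jj}\partial_{ii}u=\partial_{ii}\partial_{jj}u$). The wave cone of \emph{this} operator is $\{v\in\R^n:\text{all }v_i\ge 0\text{ or all }v_i\le 0\}$, and subharmonicity of the singular part selects the nonnegative branch. Note also that even in your version the wave cone contains $-a\otimes a$ as well as $a\otimes a$; the trace condition from $\Delta u\ge 0$ is what rules out the minus sign, and you do not invoke it.

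\textbf{Part 2 (sharpness).} Your explicit ansatz $\partial_{11}u=\partial_{22}u=g$, $\partial_{12}u=h$ with $g\pm h\ge 0$ a.e.\ cannot produce a counterexample: $g+h\ge 0$ and $g-h\ge 0$ force $g\ge|h|\ge 0$ a.e., so $(\partial_{ii}u)_-=g_-\equiv 0$. More structurally, any ansatz with $\partial_{11}u=\partial_{22}u$ makes $\Delta u=2\partial_{11}u$, so subharmonicity already gives $\partial_{11}u\ge 0$ and there is nothing left to show. To get $(\partial_{ii}u)_-$ nontrivial you must let the diagonal entries differ and trade sign across regions while keeping their sum nonnegative; this is precisely what the paper engineers via convex integration. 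It builds laminates $\nu_{p,k}$ supported in $\diag_+(2)=\{X\in\Sym(2):\tr X\ge 0,\ x_{12}=0\}$, splitting $k\,\id$ along rank-one directions into pieces some of which have one \emph{negative} diagonal entry (but positive trace), with weights tuned so that $\int|X|\,d\nu$ stays bounded while $\int(x_{ii})_-^q\,d\nu$ diverges for every $q>1$; these laminates are then realised as Hessians by Lemma~\ref{ind} and iterated at dyadic scales. Your ``lacunary / self-similar dyadic'' intuition points the right way, but the mechanism that produces a negative diagonal entry under the trace constraint, with summable $L^1$ norm and divergent $L^q$ norm, genuinely requires the laminate construction rather than a single-potential ansatz.
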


\subsection{Notation}

In the following, $\Omega\subset \R^n$ is always an open domain, and $\mathcal{M}(\Omega)$ denotes the space of signed Radon measures on $\Omega$. Similarly, we denote by $\mathcal{M}_+(\Omega)$ the space of (nonnegative) Radon measures on $\Omega$.  In this note, we say that $\mu \in \mathcal{M}(\Omega)$ is singular if it is singular with respect to the Lebesgue measure. In particular, for any vector-valued measure $\nu \in (\mathcal{M}(\Omega))^N$, we can consider its unique Lebesgue decomposition, see \cite[Theorem 1.31]{EVG}:
\[
\nu = Fdx + Ad\nu^s,
\]
where $F \in L^1(\Omega,\R^N)$, $\nu^s\in\mathcal{M}_+(\Omega)$ is a finite, singular measure, and $A$ is a Borel vector field of $\R^N$, taking values in the sphere $\mathbb{S}^{N - 1}$ a.e..

We denote by $f_+ \ge 0$ the positive part of a function $f$, and by $M_+$ the positive part of the symmetric matrix $M\in \Sym(n)$. Namely, if $M = O D O^T$ for some diagonal matrix $D$ and orthogonal matrix $O$, then $M_+ = OD_+ O^T$, where $D_+$ is the entrywise positive part of $D$. Analogously, let $f_{-} \ge 0$ and $M_{-}$ the negative part of $f$ and $M$, respectively. We have $f = f_+ - f_-$ and $M = M_+-M_-$.

For a measure $\mu\in\mathcal{M}(\Omega)$, we can decompose uniquely (up to $\mu$-negligible sets) $\mu = \mu_+ - \mu_{-}$, with $\mu_+ \ge 0$ and $\mu_{-} \ge 0$. Finally, for a symmetric matrix-valued signed measure $\nu\in (\mathcal{M}(\Omega))^{n\times n}$ (where $\nu_{ij} = \nu_{ji}$ for all $1\le i, j\le n$), with decomposition $\nu = Fdx + Ad\nu^s$, we can split $\nu = \nu_+ - \nu_-$ where we have denoted
\[
\nu_\pm := F_\pm dx + A_\pm d\nu^s,
\] 
and where $F_\pm(x)$ and $A_\pm(x)$ are the positive and negative part of $F(x), A(x)\in \Sym(n)$.
\section{Improvement of regularity}

\begin{prop}\label{BV}
Let $u \in L^1_{\rm loc}(\Omega)$ be subharmonic in the distributional sense, and let $D^2 u\in ({\mathcal{M}}(\Omega))^{n\times n}$. Let us decompose  
\[
D^2 u = (D^2 u)_+ - (D^2 u)_-.
\]
 Then,  we have
\[
(D^2 u)_- \in L^1_{\rm loc}(\Omega),
\]
that is, the negative part of $D^2 u$ is not singular. 
\end{prop}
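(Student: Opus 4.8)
The plan is to combine a structural fact about the singular part of a Hessian-type measure with the sign information carried by its trace. Write the Lebesgue decomposition of $\nu := D^2 u$ as in the Notation section, $\nu = F\,dx + A\,\nu^s$, with $F \in L^1_{\rm loc}(\Omega;\Sym(n))$, $\nu^s \in \mathcal{M}_+(\Omega)$ singular, and $A(x)\in\Sym(n)$, $|A(x)| = 1$ for $\nu^s$-a.e.\ $x$. Since by definition $(D^2 u)_- = F_-\,dx + A_-\,\nu^s$, the whole statement reduces to showing that $A(x)$ is positive semidefinite for $\nu^s$-a.e.\ $x$: then $A_- = 0$ $\nu^s$-a.e., so $(D^2 u)_- = F_-\,dx$ is absolutely continuous, and $F_- \in L^1_{\rm loc}$ yields the conclusion.

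The first ingredient is that $D^2 u$, being a distributional Hessian, is $\mathcal{A}$-free for the first-order homogeneous operator $\mathcal{A}M := (\partial_k M_{ij} - \partial_j M_{ik})_{i,j,k}$ on $\Sym(n)$-valued fields: indeed $\partial_k \nu_{ij} = \partial_k\partial_j\partial_i u = \partial_j\partial_k\partial_i u = \partial_j \nu_{ik}$ in the sense of distributions (this uses only $u \in L^1_{\rm loc}$). Computing the symbol, $\mathcal{A}(\xi)M = 0$ forces every row of $M$ to be parallel to $\xi$, which together with symmetry gives $M = \lambda\,\xi\otimes\xi$; hence the wave cone is $\Lambda_{\mathcal{A}} = \{\lambda\,\xi\otimes\xi : \lambda\in\R,\ \xi\in\R^n\}$, i.e.\ the symmetric matrices of rank at most one (of either sign). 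By the De Philippis--Rindler structure theorem for $\mathcal{A}$-free measures, the polar of the singular part lies in the wave cone: for $\nu^s$-a.e.\ $x$,
\[
A(x) = \pm\, a(x)\otimes a(x), \qquad a(x) \in \R^n,\ |a(x)| = 1.
\]
(Alternatively, one first observes that a subharmonic $u\in L^1_{\rm loc}$ is automatically in $W^{1,1}_{\rm loc}$, so that $\nabla u \in BV_{\rm loc}(\Omega;\R^n)$ and $D^2 u = D(\nabla u)$, and then invokes Alberti's rank-one theorem together with the symmetry of $D^2 u$.)

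The second ingredient is subharmonicity, used through the trace. Taking traces in the Lebesgue decomposition gives $\Delta u = (\tr F)\,dx + (\tr A)\,\nu^s$; since $(\tr F)\,dx \ll dx$ while $(\tr A)\,\nu^s \perp dx$, uniqueness of the Lebesgue decomposition identifies $(\tr A)\,\nu^s$ with the singular part of the \emph{nonnegative} measure $\Delta u$, whence $(\tr A)\,\nu^s \ge 0$ and therefore $\tr A(x) \ge 0$ for $\nu^s$-a.e.\ $x$. But $\tr\big(\pm a(x)\otimes a(x)\big) = \pm|a(x)|^2 = \pm 1$, so the sign in the displayed formula must be $+$; thus $A(x) = a(x)\otimes a(x) \succeq 0$ for $\nu^s$-a.e.\ $x$, completing the reduction.

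The only genuinely nontrivial step is the rank-one structure of the singular part (Alberti / De Philippis--Rindler): this is exactly what upgrades the scalar inequality $\tr A \ge 0$, which alone is far too weak, into the matrix inequality $A \succeq 0$ that annihilates the singular part of $(D^2 u)_-$. The remaining points — verifying the $\mathcal{A}$-freeness of $D^2 u$ and computing its wave cone (or, in the alternative route, checking $\nabla u \in BV_{\rm loc}$ and the symmetry of $D^2u$), and matching the Lebesgue decompositions of $\nu$ and of $\tr\nu$ — are routine, and I expect the write-up to consist mainly of making these bookkeeping steps precise and quoting the structure theorem in the appropriate form.
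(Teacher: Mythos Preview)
Your proposal is correct and follows essentially the same route as the paper: reduce to showing the polar $A(x)$ of the singular part is positive semidefinite, obtain the rank-one structure $A(x)=\lambda(x)\,a(x)\otimes a(x)$, and then use subharmonicity to force $\lambda(x)=\tr A(x)\ge 0$. The only cosmetic difference is that the paper invokes Alberti's rank-one theorem directly (your ``alternative route''), while you lead with the De Philippis--Rindler structure theorem for the curl-type operator; the paper reserves De Philippis--Rindler for the subsequent proposition where only the pure derivatives $\partial_{ii}u$ are assumed to be measures.
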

\begin{remark}
In particular, the polar part of $(D^2 u)^s$ is nonnegative and we deduce that 
\[
(\partial_{ee}u)^s \ge 0,
\]
for any $e\in \mathbb{S}^{n-1}$ if $\nu^s$ denotes the singular part of the measure $\nu$ with respect to the Lebesgue measure.
\end{remark}
\begin{proof}
By Alberti's rank-one theorem, \cite{Alb93}, if we decompose $D^2 u$ into 
\[
D^2 u = F dx + A d\mu,
\]
then $A\in L^1_{\rm loc}(\mu)$ is of rank-one at $\mu\in\mathcal{M}_+(\Omega)$ a.e. point. Moreover, since the measure $D^2u$ is symmetric (i.e. $(D^2u)_{ij} = (D^2u)_{ji}$ for all $i, j$ in the sense of measures), we deduce that $A(x)$ is a symmetric matrix at $\mu$-a.e. $x \in \Omega$. It follows that,
\[
A(x) = \lambda(x) a(x)\otimes a(x)\quad \text{at $\mu$-a.e. $x \in \Omega$},
\]
for Borel $\lambda \in L^1(\Omega;\mu)$, $a: \Omega \to \mathbb{S}^{n-1}$. By subharmonicity, we further deduce
\[
\tr(A(x)) = \lambda(x)|a(x)|^2 = \lambda(x) \ge 0
\]
at $\mu$-a.e. $x$. Hence, for any $v \in \R^n$,
\[
(A(x)v,v) = \lambda(x) (a,v)^2 \ge 0,
\]
at $\mu$-a.e. $x \in \Omega$. This concludes the proof.
\end{proof}

The previous proposition can be improved in two ways. First, we can consider more general operators than $\Delta$. In particular, consider any Borel matrix-field $A= A(x)$, with 
\[
A \in \{M: M \in C^0(\Omega,\Sym(n)), \dv(M) \in L_{\loc}^n(\Omega,\R^n)\}.
\]
If
\[
A(x) > 0\quad \text{a.e. in }\Omega \text{ and }\dv(A \,Du) \ge 0,
\]
in the sense of distributions, then it is easy to see that the same proof of the previous result yields once again $(D^2u)_{-} \in L^1_{\loc}$. Of course, Proposition \ref{BV} is a particular instance of this statement, obtained by taking $A \equiv \id_n$. The other way it can be improved is by noticing that $\Delta u$ only involves pure derivatives of $u$, and hence it is natural to ask whether requirement $D^2 u\in ({\mathcal{M}}(\Omega))^{n\times n}$ can be replaced by $\partial_{ii}u \in {\mathcal{M}}(\Omega)$, for all $1 \le i \le n$. Notice that the latter condition does not imply the former even if $\Delta u\ge 0$, as can be seen for instance through \cite[Theorem 3]{CFM}.

 The next proposition precisely tells us that the condition $\partial_{ii}u \in \mathcal{M}(\Omega)$ is enough to infer the same conclusion of Proposition \ref{BV}.

\begin{prop}\label{deri}
Let $u \in L^1_{\loc}(\Omega)$ be subharmonic in the distributional sense, such that $\partial_{ii} u \in \mathcal{M}(\Omega)$ for all $i\in \{1,\dots,n\}$. Let us decompose 
\[
\partial_{ii} u = (\partial_{ii} u)_+ - (\partial_{ii} u)_-. 
\] 
Then, we have
\[
(\partial_{ii} u)_- \in L^1_{\rm loc}(\Omega)\qquad\text{for all}\quad i\in \{1,\dots,n\},
\]
that is, the negative part of  $\partial_{ii} u$ is not singular.
\end{prop}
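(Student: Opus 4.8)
The plan is to reduce the statement to Proposition \ref{BV} by a mollification and compactness argument, exploiting the fact that subharmonicity controls the full Hessian once we know it is a measure on the diagonal plus something harmless off-diagonal. Concretely, fix a standard mollifier $\rho_\varepsilon$ and set $u_\varepsilon := u * \rho_\varepsilon$, which is smooth, subharmonic, and satisfies $\partial_{ii} u_\varepsilon = (\partial_{ii} u) * \rho_\varepsilon \to \partial_{ii} u$ weakly-$*$ as measures, locally. First I would record the elementary but crucial pointwise inequality for smooth functions: since the Hessian $D^2 u_\varepsilon$ is a symmetric matrix whose trace $\Delta u_\varepsilon$ is nonnegative, one has $\sum_{i} (\partial_{ii} u_\varepsilon)_- \le \sum_i (\partial_{ii} u_\varepsilon)_+ \le (\text{something controlled by } \Delta u_\varepsilon)$ — more precisely, $(\partial_{ii} u_\varepsilon)_- \le \sum_{j \ne i} (\partial_{jj} u_\varepsilon)_+ + \Delta u_\varepsilon$ is too crude; the sharp route is to bound $|\partial_{ij} u_\varepsilon|$ and $(\partial_{ii}u_\varepsilon)_-$ in $L^1_{\loc}$ uniformly in $\varepsilon$.

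The heart of the matter is therefore a uniform $L^1_{\loc}$ bound on $(\partial_{ii} u_\varepsilon)_-$. Here I would argue as follows: testing subharmonicity, for any nonnegative $\varphi \in C_c^\infty$ one has $\int (\partial_{ii} u_\varepsilon)_- \varphi \le \int (\partial_{ii} u_\varepsilon)_+ \varphi$, and summing in $i$, $\sum_i \int (\partial_{ii} u_\varepsilon)_- \varphi \le \sum_i \int (\partial_{ii} u_\varepsilon)_+ \varphi = \sum_i \int (\partial_{ii} u_\varepsilon)_- \varphi + \int (\Delta u_\varepsilon) \varphi$, which only re-proves $\int \Delta u_\varepsilon \varphi \ge 0$ and gives nothing new. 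The genuine gain must come from the hypothesis that each $\partial_{ii}u$ is itself a \emph{finite measure}: this gives a uniform bound $\int_{\Omega'} |\partial_{ii} u_\varepsilon| \, dx \le |\partial_{ii}u|(\Omega'') =: C_i < \infty$ for $\Omega' \Subset \Omega'' \Subset \Omega$ and $\varepsilon$ small. Hence $(\partial_{ii} u_\varepsilon)_-$ is bounded in $L^1_{\loc}$, so up to a subsequence it converges weakly-$*$ to some measure $\sigma_i \ge 0$, and similarly $(\partial_{ii}u_\varepsilon)_+ \rightharpoonup \tau_i \ge 0$, with $\partial_{ii}u = \tau_i - \sigma_i$. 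The point is then to show $\sigma_i \ll \mathcal L^n$, equivalently $\sigma_i = (\partial_{ii}u)_-$ (the minimal decomposition) and this minimal negative part is absolutely continuous.

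To get absolute continuity I would return to the rank-one structure exactly as in Proposition \ref{BV}, but applied component-wise. Write, for the full Hessian viewed in a distributional sense, the off-diagonal entries are a priori only distributions; however, on the singular set the relevant object is the \emph{diagonal} measure $\mu := \sum_i |\partial_{ii}u|^s$ and we want to show $(\partial_{ii}u)^s \ge 0$. The clean way: apply Proposition \ref{BV} not to $u$ but to a rotated function. For a generic orthogonal $O$, the function $v := u \circ O$ is still subharmonic, and $\partial_{ii}v$ is a linear combination $\sum_{k,l} O_{ki}O_{li}\,\partial_{kl}u$ of all second partials of $u$ — so requiring $\partial_{ii}v \in \mathcal M$ for all $i$ and all $O$ in a suitable finite family would recover $D^2u \in \mathcal M^{n\times n}$, which is what we are \emph{not} assuming. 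So rotation alone does not close the gap, and the real obstacle — which I expect to be the main difficulty — is precisely that the off-diagonal second derivatives need not be measures, so Alberti's rank-one theorem is not directly available for $D^2u$. The resolution I would pursue: show that the off-diagonal part $\partial_{ij}u$ ($i \ne j$), while possibly not a measure, cannot create a singular \emph{negative} contribution to any $\partial_{ii}u$, by using that for the mollifications the matrix $D^2u_\varepsilon$ satisfies, at every point, $\min_i \partial_{ii}u_\varepsilon \ge \lambda_{\min}(D^2 u_\varepsilon) \ge -(n-1)\lambda_{\max}(D^2u_\varepsilon)$ together with $\lambda_{\max} \le \Delta u_\varepsilon + (n-1)(-\lambda_{\min})$; iterating gives $(\partial_{ii}u_\varepsilon)_- \le (n-1)\big(\Delta u_\varepsilon + \sum_j (\partial_{jj}u_\varepsilon)_-\big)$-type bounds which, combined with the uniform $L^1$ bound on $\Delta u_\varepsilon$ (a finite measure, since $\Delta u = \sum \partial_{ii}u$ is a finite measure) and a Gronwall/absorption step over dyadic scales, force the limiting $\sigma_i$ to be controlled by the absolutely continuous part of $\Delta u$. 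Passing to the limit $\varepsilon \to 0$ and invoking lower semicontinuity of the total variation then yields $(\partial_{ii}u)_- \in L^1_{\loc}$, and the remark about $(\partial_{ee}u)^s \ge 0$ follows verbatim as after Proposition \ref{BV}.
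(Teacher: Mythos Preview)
Your proposal has a genuine gap: none of the steps you outline can rule out concentration of $(\partial_{ii}u_\varepsilon)_-$ onto a singular set in the limit. The uniform $L^1_{\loc}$ bound on $(\partial_{ii}u_\varepsilon)_-$ follows immediately from the hypothesis that $\partial_{ii}u$ is a measure, but bounded sequences in $L^1$ may well converge weakly-$*$ to singular measures; to get absolute continuity you would need equi-integrability, and nothing you wrote provides it. The eigenvalue gymnastics you sketch, e.g.\ $(\partial_{ii}u_\varepsilon)_- \le (n-1)\bigl(\Delta u_\varepsilon + \sum_j (\partial_{jj}u_\varepsilon)_-\bigr)$, only bounds one $L^1$-bounded quantity by another; moreover $\Delta u_\varepsilon \rightharpoonup \Delta u$, which is itself a nonnegative measure that may very well have a singular part, so there is no reason the limit $\sigma_i$ should be dominated by the \emph{absolutely continuous} part of $\Delta u$. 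The ``Gronwall/absorption over dyadic scales'' is not a mechanism here --- there is no smallness to absorb. Also note that the weak-$*$ limits $\sigma_i,\tau_i$ need not coincide with $(\partial_{ii}u)_-,(\partial_{ii}u)_+$; you only get $\sigma_i \ge (\partial_{ii}u)_-$, so even if you proved $\sigma_i \ll \mathcal{L}^n$ you would be done, but the point is you have not.

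The missing idea is that the measures $\mu_{ii}:=\partial_{ii}u$ are not independent: they satisfy the differential constraint $\partial_{ii}\mu_{jj}=\partial_{jj}\mu_{ii}$ in the sense of distributions, for all $i,j$. The paper packages this as $\mathcal A(\mu)=0$ for the second-order operator $\mathcal A(v)_{ij}:=\partial_{ii}v_j-\partial_{jj}v_i$ acting on the vector $\mu=(\mu_{11},\dots,\mu_{nn})$, and then invokes the De~Philippis--Rindler structure theorem for $\mathcal A$-free measures: the polar $P(x)$ of the singular part must lie in the wave cone $\Lambda_{\mathcal A}$ at $\mu^s$-a.e.\ point. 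A direct computation shows $\Lambda_{\mathcal A}=\{v\in\R^n: \text{all }v_i\ge 0\text{ or all }v_i\le 0\}$, and subharmonicity $\sum_i P_i\ge 0$ then forces $P_i\ge 0$ for every $i$. This argument works directly on the singular part of the measure and needs no mollification or compactness; it is the analogue of Alberti's rank-one theorem (used in Proposition~\ref{BV}) when only the diagonal second derivatives are assumed to be measures.
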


\begin{remark}
The assumption that $\partial_{ii} u\in \mathcal{M}(\Omega)$ is necessary. In general, it is not true that subharmonic functions have second derivatives (in the distributional sense) that have finite mass. As an example, take $u(x) = \log(|x|)$ for $x\in \R^2$. 
\end{remark}
\begin{remark}
We will see in Theorem~\ref{Countp} below that the improvement of regularity from $\mathcal{M}$ to $L^1$ for $(\partial_{ii} u)_-$ is optimal within $L^p$ spaces.
\end{remark}

In order to show Proposition \ref{deri}, let us recall the main result of \cite{GUIANN}. Let $\mathcal{A}$ be a linear operator of order $k$ acting on vector valued functions $\varphi \in C^\infty(\R^m,\R^n)$
\[
\mathcal{A}(\varphi) \in C^\infty(\R^m,\R^N),\qquad \mathcal{A} (\varphi) := \sum_{| \alpha |\leq k} A_\alpha \partial^{\alpha} \varphi,\quad A_\alpha \in \R^{N\times n}, 
\]
where $\alpha$ is a multi-index of length $|\alpha|$. We can then give meaning to expressions of the form 
$
\mathcal{A}(\mu) = \nu,
$
 simply by passing to the weak form of the latter. Let us define, 
\[
\mathbb{A}_k(\xi) = \sum_{|\alpha| = k}A_\alpha\xi^\alpha\quad\text{for each $\xi\in \R^m$},\qquad \Lambda_{\mathcal{A}} := \bigcup_{\xi \neq 0} \Ker(\mathbb{A}_k(\xi)) \subset \R^n.
\]
Here, $\xi^\alpha = \xi_{\alpha_1}\dots\xi_{\alpha_k}$ if $\alpha = (\alpha_1,\dots,\alpha_k)$ and $\Lambda_{\mathcal{A}}$ is called the $\Lambda$-cone associated to $\mathcal{A}$. Then \cite[Theorem 1.1]{GUIANN} tells us that, if $\mu$ is such that 
\[
\mu = Fdx + Pd\mu^s,
\]
for some positive and singular $\mu^s$, and it solves $\mathcal{A}(\mu) = \nu$ for some measure $\nu$ and operator $\mathcal{A}$, we have
\[
P(x) \in \Lambda_{\mathcal{A}},\qquad \text{ at }\mu^s \text{-a.e. }x.
\]

\begin{proof}[Proof of Proposition \ref{deri}]

Consider the decomposition
\[
\mu_{ii} := \partial_{ii} u = f_{ii}dx + d\mu_{ii}^s,
\]
with $\mu_{ii}^s$ finite and singular with respect to the Lebesgue measure, and let us define the vector-valued measure $\mu := (\mu_{11},\dots,\mu_{nn})$. Then, if $u$ is subharmonic, $
\sum_{i}\mu_{ii}\ge 0$.
We want to show $\mu_{ii}^s \ge 0$ for all $i\in \{1,\dots,n\}$. Let us define $\mathcal{A}$ acting on $C^\infty(\R^n,\R^n)$ and taking values in $\R^{n\times n}$ by
\[
\mathcal{A}(v)_{ij} := \partial_{ii}v_j - \partial_{jj}v_i,\quad \text{for all}\quad  i,j\in \{1,\dots,n\},  \quad \text{for all}\quad v \in C^\infty(\R^n,\R^n).
\]
Observe that, with this definition, we have $\mathcal{A}(\mu) = 0$ in the sense of distributions.

From \cite[Theorem 1.1]{GUIANN}, the polar vector $P(x)$ of its singular part belongs to $\Lambda_{\mathcal{A}}$ at $\mu^s := \sum_{i}\mu_{ii}^s$ -a.e. $x$. Notice that $\Lambda_{\mathcal{A}}$ is given by
\[
\Lambda_{\mathcal{A}} = \{v \in \R^n: \text{either $v_i \ge 0$ for all $i$ or $v_i \le 0$ for all $i$}\},
\]
since by definition $v \in \Lambda_{\mathcal{A}}$ if and only if there exists $\xi \in \R^m\setminus\{0\}$ such that \[
\xi_i^2v_j = \xi_j^2v_i\qquad\text{for all $i, j\in \{1,\dots,n\}$}.
\] 

Hence, either $P_i(x) \ge 0$ for all $i$ or $P_i(x) \le 0$ for all $i$, at $\mu^s$-a.e. $x\in \R^n$. Since $\Delta u \ge 0$, we get the desired result, $P_i(x) \ge 0$ for all $i$ at $\mu^s$-a.e. $x\in \R^n$.
\end{proof}

\section{Counterexample for Lipschitz subharmonic functions}

The main results of this section are the following two theorems. For the first theorem below, we observe that requiring a subharmonic function to be Lipschitz is not enough to guarantee the existence of second derivatives as measures. We refer to Section~\ref{sec.obst} for an interpretation in the obstacle problem case. 

\begin{theorem}\label{Count}
There exists $u\in {\rm Lip}(B_1)$ subharmonic in the distributional sense such that $\partial_{11}u$ is not a finite measure in any open subset $\Omega' \subset B_1$. 
\end{theorem}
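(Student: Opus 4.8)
The goal is to construct a Lipschitz subharmonic $u$ on $B_1$ whose pure second derivative $\partial_{11}u$ fails to be a locally finite measure on *any* open subset. The natural strategy is to build $u$ as a superposition of suitably scaled and translated copies of a single "bump" profile whose $\partial_{11}$ contribution is a signed measure with cancellation — e.g. a positive line charge flanked by negative line charges — so that the positive part is controlled (ensuring $\Delta u \ge 0$) but the total variation of $\partial_{11}u$ blows up on every ball.

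Let me think about what profile to use. A clean choice: let $w$ solve an obstacle-type problem, or more directly, take $w$ to be a function that is harmonic off a segment and whose $\partial_{11}w$ is $+$ (a measure on the segment) minus something absolutely continuous... Actually the simplest is: in $\mathbb{R}^n$ write $x = (x_1, x')$. Consider $g(x_1)$ a Lipschitz, piecewise-linear function of $x_1$ alone whose second derivative $g'' = \sum c_k \delta_{a_k}$ is a *signed* atomic measure on $\mathbb{R}$ with $\sum_{k} (c_k)_+ < \infty$ but $\sum_k |c_k| = \infty$. This already gives $\partial_{11}g$ not a finite measure. But $g$ is not subharmonic if it has negative atoms. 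The fix: add a convex function $h(x_1)$ of large enough slope increments near the negative atoms — but that would again make $g+h$ have finite $\partial_{11}$ if done globally. The right move is to use the *other* variables: near each negative atom of $g''$, the subharmonicity deficit $\Delta(g) = g''$ can be repaired by a function whose Laplacian is large and positive there but which does *not* contribute negatively (or contributes boundedly) to $\partial_{11}$ — for instance something built from $\partial_{22}$.

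Here is the plan I'd follow. \textbf{Step 1.} Reduce to $n = 2$ (the general case follows by adding a harmonic function of the remaining variables, or by a product construction) and work on a fixed ball. \textbf{Step 2.} Construct a single building block $\phi_\rho$ at scale $\rho$: a Lipschitz function, supported (up to harmonic pieces) in a ball of radius $\rho$, with $\|\phi_\rho\|_{\mathrm{Lip}} \lesssim 1$ uniformly in $\rho$, with $\Delta \phi_\rho \ge 0$, such that $(\partial_{11}\phi_\rho)_+$ has mass $\lesssim \rho$ (summable) but $\mathrm{Var}(\partial_{11}\phi_\rho) \gtrsim 1$ — a fixed amount of cancellation independent of scale. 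The idea: $\partial_{11}\phi_\rho$ looks like a positive charge of mass $\approx 1$ on a segment of length $\rho$ minus a negative charge of comparable mass on a nearby parallel segment, with the negative piece's contribution to $\Delta$ compensated by a positive $\partial_{22}$ contribution there. \textbf{Step 3.} Take a dense sequence of points $\{p_k\} \subset B_1$ and scales $\rho_k \to 0$ fast, and set $u = \sum_k \eps_k \phi_{\rho_k}(\cdot - p_k)$ with $\eps_k$ chosen so that $\sum \eps_k \|\phi_{\rho_k}\|_{\mathrm{Lip}} < \infty$ (Lipschitz convergence), $\sum \eps_k (\text{mass of }(\partial_{11}\phi_{\rho_k})_+) < \infty$, yet — crucially — for every fixed open $\Omega'$, infinitely many $p_k \in \Omega'$ and $\sum_{p_k \in \Omega'} \eps_k \cdot \mathrm{Var}(\partial_{11}\phi_{\rho_k}) = \infty$. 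Subharmonicity passes to the sum since each term is subharmonic and the sum converges locally uniformly.

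\textbf{The main obstacle} is Step 2: engineering a building block where the *cancellation* in $\partial_{11}$ is bounded below by a scale-independent constant while the *positive part* of $\partial_{11}$ (which feeds into $\Delta \ge 0$) and the Lipschitz norm are both bounded above uniformly. The tension is real: a single bump of height $\sim \rho$ and width $\sim \rho$ has second derivatives of mass $\sim 1$ (in the measure sense, spread over length $\sim \rho$ with density $\sim 1/\rho$), so putting a $+$ part and a $-$ part each of mass $\sim 1$ is consistent with $\mathrm{Lip} \sim 1$; the delicate point is arranging $\Delta \phi_\rho \ge 0$ with the negative $\partial_{11}$ compensated by $\partial_{22}$ \emph{without} that compensation spilling a negative amount into $\partial_{11}$ somewhere else or destroying the Lipschitz bound. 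I expect one wants an explicit formula — e.g. $\phi_\rho(x_1, x_2) = \rho\, \psi(x_1/\rho, x_2/\rho)$ for a fixed profile $\psi$ that is piecewise a polynomial of degree $\le 2$ (so second derivatives are piecewise constant plus singular parts on the interfaces), chosen once and for all so that $\Delta \psi \ge 0$ as a distribution but $\partial_{11}\psi$ has a genuine negative singular part along some interface — then the scaling $\phi_\rho(x) = \rho\psi(x/\rho)$ keeps $\|\nabla\phi_\rho\|_\infty = \|\nabla\psi\|_\infty$ fixed and keeps $\mathrm{Var}(\partial_{11}\phi_\rho) = \mathrm{Var}(\partial_{11}\psi)$ fixed while localizing to scale $\rho$. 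Verifying the existence of such a $\psi$ (essentially a small convex-integration / explicit-construction lemma, consistent with \cite[Theorem 3]{CFM} being cited for the failure of $D^2u \in \mathcal{M}$) is the crux; once it is in hand, Steps 1 and 3 are routine summation arguments.
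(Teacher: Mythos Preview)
Your approach has a fundamental obstruction coming precisely from the positive result of this paper. Your Step~2 asks for a fixed piecewise-quadratic profile $\psi$ that is Lipschitz, subharmonic, and such that $\partial_{11}\psi$ carries a \emph{genuine negative singular part} along an interface. But any piecewise-quadratic Lipschitz $\psi$ has $\partial_{ii}\psi\in\mathcal{M}$ for all $i$, and then Proposition~\ref{deri} forces the singular part of $\partial_{11}\psi$ to be \emph{nonnegative}; the building block you describe therefore cannot exist. Independently, your scaling claim is wrong in dimension $n\ge 2$: with $\phi_\rho(x)=\rho\,\psi(x/\rho)$ in $\R^2$ one has $\partial_{11}\phi_\rho=\rho\,(T_\rho)_*(\partial_{11}\psi)$ (where $T_\rho(y)=\rho y$), so the total variation of $\partial_{11}\phi_\rho$ equals $\rho$ times that of $\partial_{11}\psi$, not a scale-independent constant. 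Your intuition is one-dimensional; in $\R^2$ both the positive and negative parts of $\partial_{11}\phi_\rho$ scale the same way with $\rho$, and a summable superposition $\sum_k\eps_k\phi_{\rho_k}(\cdot-p_k)$ with $\sum_k\eps_k<\infty$ cannot accumulate infinite total variation in $\partial_{11}$.

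The paper's proof bypasses both issues by not attempting a building block at all. It invokes \cite[Theorem~3]{CFM}, which produces (via convex integration) a Lipschitz, \emph{separately convex} --- hence subharmonic --- function $g:\R^2\to\R$ whose mixed derivative $\partial_{12}g$ is not a locally finite measure on any open set, and then rotates by $45^\circ$: setting $u(x,y):=g(x+y,x-y)$ gives $\partial_{11}u=\partial_{\xi\xi}g+2\partial_{\xi\eta}g+\partial_{\eta\eta}g$, where the pure derivatives are nonnegative measures by separate convexity but the mixed term is not a measure, so $\partial_{11}u\notin\mathcal{M}$. The hard analytic content --- the infinitely iterated oscillation that defeats any measure bound --- lives entirely inside the CFM construction and cannot be reproduced by summing rescaled copies of a single profile; that is exactly what Proposition~\ref{deri} is telling you.
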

\begin{proof}
In \cite[Theorem 3]{CFM} it is proved that there exists a separately convex (in particular, subharmonic) and Lipschitz function $g: \R^2 \to \R$ such that $\partial_{12}g$ is not a measure and $g(z) = |z|^2$ for $z \notin (0,1)^2$. Here, separately convex means that $x\mapsto g(x,y)$ is convex for every $y$ and $y \mapsto g(x,y)$ is convex for every $x$. To find a counterexample to our statement, simply take
\[
u(x,y) := g(x+y,x-y).
\]
See \cite[Remark 1]{CFM} to find such a $u$ with degenerate behaviour in every open subset.
\end{proof}

Our second result states that, even if $\partial_{11} u$ and $\partial_{22} u$ are finite measures, in general the improvement of regularity obtained in Proposition~\ref{deri} is optimal. 

\begin{theorem}\label{Countp}
There exists $u\in {\rm Lip}(B_1)$ subharmonic in the distributional sense,  such that $u\in W^{2,1}(B_1)$ but
\[
(\partial_{ii}u(x))_{-}\notin L^q(\Omega')\quad\text{for any }\quad q > 1,\quad\text{for}\quad i = 1,2,
\]
for any open subset $\Omega' \subset B_1$.
\end{theorem}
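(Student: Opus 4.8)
The plan is to reduce the statement to the construction of a single building block on a square, to spread it over $B_1$ by scaling and gluing, and to obtain the building block by a convex-integration (staircase-laminate) argument.

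\medskip

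\noindent\textit{Reduction and spreading.} It suffices to construct, on the open unit square $S=(0,1)^2$, a function $v\in\Lip(S)$ that coincides with the quadratic $q(z):=|z|^2$ in a neighbourhood of $\partial S$, is subharmonic in $S$, belongs to $W^{2,1}(S)$, and satisfies $(\partial_{ii}v)_-\notin L^p(S)$ for $i=1,2$ and every $p>1$. Granting this, write $v=q+w$ with $w\in\Lip(S)$ vanishing near $\partial S$, choose pairwise disjoint squares $Q_k=a_k+r_kS\subset B_1$ whose union is dense in $B_1$ (so that $\sum_k r_k^2\le|B_1|<\infty$ automatically), and set
\[
u:=q+\sum_k r_k^2\, w\!\left(\tfrac{\,\cdot-a_k}{r_k}\right)\quad\text{on }B_1.
\]
The summands have pairwise disjoint supports, are Lipschitz with constant $\le\Lip(w)$, and vanish near $\partial Q_k$, so $u\in W^{1,\infty}(B_1)=\Lip(B_1)$; since moreover $\sum_k r_k^2\,\|D^2w\|_{L^1(S)}<\infty$ and each summand, vanishing near $\partial Q_k$, creates no singular second derivative there, one also gets $u\in W^{2,1}(B_1)$. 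On $Q_k$ one has $\partial_{ij}u(z)=(\partial_{ij}v)\big((z-a_k)/r_k\big)$, while $u=q$ and hence $\Delta u=2n$ on $B_1\setminus\bigcup_k Q_k$; thus $\Delta u\ge0$ in $\mathcal D'(B_1)$. Finally, changing variables $y=(z-a_k)/r_k$,
\[
\int_{Q_k}\big((\partial_{ii}u)_-\big)^p\,dz=r_k^2\int_S\big((\partial_{ii}v)_-\big)^p\,dy=+\infty\qquad(p>1,\ i=1,2),
\]
and as every open $\Omega'\subset B_1$ contains some $Q_k$, this yields $(\partial_{ii}u)_-\notin L^p(\Omega')$ for all $p>1$, which is the assertion.

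\medskip

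\noindent\textit{The building block.} This is the heart of the matter. Starting from the strictly subharmonic quadratic $q$, one runs a second-order laminate iteration. The elementary move replaces, on a thin strip where $D^2v$ is essentially a constant symmetric matrix $M$ with $\tr M\ge0$, that value by a rank-one laminate between $M+tN$ and $M-sN$ (in weights $\tfrac{s}{s+t}$ and $\tfrac{t}{s+t}$), where $N=\nu\otimes\nu$, $|\nu|=1$, with $\nu_i^2$ close to $1$ for the chosen index $i$; the essential constraint is that $\tr(M+tN)=\tr M+t\ge0$ and $\tr(M-sN)=\tr M-s\ge0$, i.e.\ the increments are confined to the trace-zero directions — exactly the degenerate directions of the wave cone of the operator $\mathcal A$ from the proof of Proposition~\ref{deri} — while $[M-sN]_{ii}=M_{ii}-s\nu_i^2$ is driven strictly negative. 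Iterating on a nested sequence of strips, one drives $\partial_{ii}v$ down to values $-\Lambda_N\to-\infty$ on strips of width $w_N$, choosing $\Lambda_N w_N$ summable (this bounds $\|D^2v\|_{L^1}$ and keeps $v$ Lipschitz) but $\sum_N\Lambda_N^{\,p}w_N=+\infty$ for every $p>1$ — achieved e.g.\ by $\Lambda_N\sim e^{N}$, $\Lambda_Nw_N\sim N^{-2}$. Carried out with the usual in-approximation bookkeeping (so that $v$ agrees with $q$ outside the compactly contained iteration region and is $C^1$ there), the limit $v$ is Lipschitz, subharmonic, equal to $q$ near $\partial S$, belongs to $W^{2,1}(S)$, and has $(\partial_{ii}v)_-\notin L^p(S)$ for $i=1,2$ and all $p>1$. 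Equivalently, after a $45^{\circ}$ rotation one may build a separately convex Lipschitz function whose mixed second derivative lies in $L^1\setminus\bigcup_{p>1}L^p$, in the spirit of \cite{CFM}, but with the iteration tuned so as to stay within $L^1$ rather than leaving the space of measures.

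\medskip

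\noindent\textit{Main obstacle.} The difficulty is concentrated in the building block: one must run the laminate iteration so that three requirements hold simultaneously — (i) subharmonicity at every stage, which pins the rank-one increments to the degenerate (trace-zero) directions of $\mathcal A$; (ii) a uniform bound on $\|D^2v\|_{L^1}$, so that the limit is in $W^{2,1}$ and Lipschitz, which caps the amplitude and width of the successive increments; and (iii) an at-least-geometric growth of the $L^p$-masses of $(\partial_{ii}v)_-$ for every $p>1$. Balancing (ii) against (iii) — $L^1$-summable but $L^p$-non-summable for all $p>1$ — is the standard obstruction in such sharp-integrability constructions, and is exactly what the staircase architecture is designed to overcome; once $v$ is in hand, the spreading step and the verification of all the properties claimed for $u$ are routine.
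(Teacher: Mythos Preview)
Your overall strategy coincides with the paper's --- staircase laminates for the building block, then tiling for the local failure --- and your spreading step and scaling ansatz are fine. The gap is in the elementary move. With $N=\nu\otimes\nu$, $|\nu|=1$, one has $\tr N=1$, so the increments $\pm sN$ are \emph{not} ``confined to the trace-zero directions''; the subharmonicity constraint $\tr(M-sN)\ge0$ simply reads $s\le\tr M$. Consequently, to reach $[M-sN]_{ii}\le-\Lambda$ with $\nu_i^2\approx1$ one needs $s\gtrsim\Lambda$, hence $\tr M\gtrsim\Lambda$ \emph{before} the split. There is no way to drive $(\partial_{ii}v)_-\to\infty$ without first driving $\tr(D^2v)\to+\infty$ on the region where the next split takes place: the staircase must climb, and this is precisely what your sketch does not articulate. (The allusion to the wave cone $\Lambda_{\mathcal A}$ from Proposition~\ref{deri} is also misplaced: that cone constrains the \emph{singular} polar, whereas here one works entirely in the absolutely continuous regime, and the relevant object is the rank-one cone inside $\Sym_+(2)$.)

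The paper makes the climb explicit and feeds both $i=1,2$ at once by alternating coordinate directions. From $k\,\id$ one splits along $e_1\otimes e_1$ to $kA$ (with $A_{11}<0$) and $k\,\diag(2,1)$; then along $e_2\otimes e_2$ to $kB$ (with $B_{22}<0$) and $2k\,\id$; and one iterates from $2k\,\id$, doubling the trace each round (Lemma~\ref{n}). The bookkeeping you outline --- $\sum\Lambda_N w_N<\infty$ but $\sum\Lambda_N^q w_N=\infty$ for all $q>1$ --- is then exactly what the paper implements by letting the splitting weight be $\beta_j(1-\alpha_j)=2^{-p_j}$ with $p_j=1+\tfrac{2}{j\log 2}\downarrow1$: this gives $|\Omega_j|\sim\prod_{m\le j}2^{-p_m}$, so $2^j|\Omega_j|\sim j^{-2}$ is summable (hence $D^2u\in L^1$), while $2^{qj}|\Omega_j|\to\infty$ for every $q>1$. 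Your choice $\Lambda_N\sim e^N$, $\Lambda_Nw_N\sim N^{-2}$ is equivalent in spirit, but without the climbing-trace staircase there is no mechanism that produces these scales while preserving $\Delta v\ge0$.
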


The proof is an adaptation of the convex integration methods developed in \cite{CFM} to produce counterexamples to $L^1$ estimates (cf. \cite[Lemmas 1 and 2]{CFM}). First of all, we need to recall some of the main points of the convex integration methods of \cites{SMVS,KIRK}. Here we will only describe the method for symmetric matrices in $\R^{2\times 2}$, but the theory is well developed for vectorial problems in $\R^{n\times m}$, and more generally for general linear operators, see \cite{ST}.

\subsection{Laminates of finite order and elementary splitting}

We say that $A,B \in \Sym(2)$ are \emph{rank-one connected} if
\[
\rank(A-B) = 1.
\]
We have the following, see \cite[Proposition 3.4]{KIRK}:
\begin{lemma}[\cite{KIRK}]\label{sl}
Let $A,B,C \in \Sym(2)$, with $\rank(B - C) = 1$, and $A = tB + (1 - t)C$, for some $t \in [0,1]$. Let also $\Omega \subset \R^2$ be a fixed open domain. Then, for every $\varepsilon > 0$, one can find a Lipschitz piecewise affine map $f_\eps: \Omega\to \R^2$ such that
\begin{enumerate}
\item $f_\eps(x) = Ax$ on $\partial\Omega$ and $\|f_\eps - A\|_{\infty} \le \varepsilon$;
\item $D f_\eps(x) \in \Sym(2)\cap B_{\varepsilon}([B,C])$ (here $[B,C]$ denotes the segment connecting $B$ and $C$);
\item $|\{x \in \Omega: D f_\eps(x) = B\}|\ge (1 - \varepsilon)t|\Omega|$ and $|\{x \in \Omega: D f_\eps(x) = C\}|\ge (1 - \varepsilon)(1 - t)|\Omega|$.
\end{enumerate}
Moreover, for every continuous $\Phi \in C(\R^{2\times 2})$, it holds
\[
\frac{1}{|\Omega|}\int_{\Omega}\Phi(Df_\eps)dx \to \int_{\R^{2\times 2}}\Phi(X) d\nu(X).
\]
\end{lemma}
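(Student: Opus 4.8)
The plan is to carry out the classical \emph{elementary splitting} construction underlying \cite[Proposition 3.4]{KIRK}, which has two ingredients: (a) a single \emph{building block} on a fixed convenient domain --- say the unit square $Q=(0,1)^2$ --- and (b) a Vitali covering / rescaling argument transferring the block to an arbitrary $\Omega$. What makes the symmetry constraint $Df_\varepsilon(x)\in\Sym(2)$ costless is the following: since $\rank(B-C)=1$ and $B-C$ is symmetric, one may write $B-C=\mu\,v\otimes v$ with $\mu\neq 0$, $v\in\mathbb{S}^1$, so the oscillation is compatible with a \emph{gradient} field; if the block is produced as $Df=D^2w$ for a scalar potential $w$, all gradients are Hessians and are automatically symmetric. (If $t\in\{0,1\}$ the statement is trivial with $f_\varepsilon(x)=Ax$, so assume $t\in(0,1)$.)

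\emph{The building block on $Q$.} After an orthogonal change of variables assume $v=e_1$, so that $B$, $C$ and $A=tB+(1-t)C$ coincide off their $(1,1)$-entry, with $B_{11}-A_{11}=(1-t)\mu$ and $C_{11}-A_{11}=-t\mu$. Choose a $1$-periodic, $C^1$, piecewise-quadratic profile $G:\R\to\R$ with $G(0)=G'(0)=0$ and $G''\in\{(1-t)\mu,-t\mu\}$ a.e., arranged so that $\{G''=(1-t)\mu\}$ and $\{G''=-t\mu\}$ have densities $t$ and $1-t$ in each period (such a $G$ exists by an elementary computation: the density $t$ is forced by $1$-periodicity of $G'$, and a symmetric placement of the two pieces makes $\int_0^1 G'=0$, hence $G(1)=G(0)$ and the periodic extension $C^1$). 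Fix a cutoff $\varphi\in C^\infty([0,1];[0,1])$ with $\varphi\equiv 1$ on $(\delta,1-\delta)$ and with $\varphi,\varphi'$ vanishing at $0$ and $1$, and set, for a large integer $N$,
\[
w(x):=\tfrac12\,x^TAx+\tfrac{1}{N^2}\,\varphi(x_2)\,G(Nx_1),\qquad f:=\nabla w .
\]
Then $Df=D^2w\in\Sym(2)$ pointwise, $f=Ax$ on $\partial Q$ (since $\varphi,\varphi'$ vanish on $\{x_2\in\{0,1\}\}$ while $G,G'$ vanish on $\{Nx_1\in\{0,N\}\}$), and $\|f-Ax\|_\infty\le\tfrac1N\|G'\|_\infty+\tfrac1{N^2}\|\varphi'\|_\infty\|G\|_\infty$. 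On the plateau $\{\delta<x_2<1-\delta\}$ one has $D^2w=A+G''(Nx_1)\,e_1\otimes e_1\in\{B,C\}$ exactly, the two values occurring on sets of measure $\ge(1-2\delta)t\,|Q|$ and $\ge(1-2\delta)(1-t)\,|Q|$; on the complementary layer (measure $2\delta|Q|$), $D^2w=\bigl(A+\varphi G''\,e_1\otimes e_1\bigr)+\tfrac1N\varphi'G'\,(e_1\otimes e_2+e_2\otimes e_1)+\tfrac1{N^2}\varphi''G\,e_2\otimes e_2$, whose first term lies on the segment $[C,B]$ and whose remaining terms are $O(N^{-1}\delta^{-1})+O(N^{-2}\delta^{-2})$. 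Choosing $2\delta\le\varepsilon$ and then $N=N(\varepsilon)$ large, all three items hold for $f$ on $Q$ --- except that $f$ is only piecewise-$C^1$; replacing the smooth modulation in the thin layer by Kirchheim's explicit piecewise-affine lamination yields a genuinely piecewise affine block, and I expect \emph{this localized piecewise-affine realization, keeping the gradients simultaneously exactly symmetric and inside the $\varepsilon$-tube around $[B,C]$}, to be the main technical obstacle.

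\emph{Passage to a general $\Omega$, and the weak-$*$ limit.} Given $\Omega$ and $\varepsilon>0$, the Vitali covering theorem writes $\Omega$, up to a set of measure $\le\eta|\Omega|$, as a disjoint union of homothetic copies $Q_k=x_k+r_kQ$; on each $Q_k$ paste the rescaled block $x\mapsto Ax_k+r_k\,f((x-x_k)/r_k)$, and set $f_\varepsilon:=Ax$ on the leftover set. Then $f_\varepsilon$ is Lipschitz (and, with Kirchheim's block, piecewise affine), equals $Ax$ on $\partial\Omega$, satisfies $\|f_\varepsilon-Ax\|_\infty\le\diam(\Omega)\,\|f-Ax\|_\infty$, and $Df_\varepsilon$ takes the same matrix values as $Df$, so items (1)--(3) follow once $\eta,\delta,1/N$ are small in terms of $\varepsilon$. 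Finally, (3) gives $|\{Df_\varepsilon=B\}|/|\Omega|\to t$ and $|\{Df_\varepsilon=C\}|/|\Omega|\to1-t$, while the remaining set has measure $o(|\Omega|)$; since $Df_\varepsilon$ ranges (for $\varepsilon<1$) in a fixed compact subset of $\R^{2\times2}$, for every $\Phi\in C(\R^{2\times2})$ we obtain
\[
\frac{1}{|\Omega|}\int_\Omega\Phi(Df_\varepsilon)\,dx\longrightarrow t\,\Phi(B)+(1-t)\,\Phi(C)=\int_{\R^{2\times2}}\Phi\,d\nu ,
\]
where $\nu=t\,\delta_B+(1-t)\,\delta_C$ is the elementary laminate defined by $A=tB+(1-t)C$. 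This completes the plan.
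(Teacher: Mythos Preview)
The paper does not give its own proof of this lemma: it is quoted verbatim from \cite[Proposition~3.4]{KIRK} and used as a black box. So there is nothing on the paper's side to compare against.

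As for your sketch itself: the idea is sound and is indeed the standard argument. Writing the block as $f=\nabla w$ so that $Df=D^2w$ is automatically symmetric is exactly the right way to handle the constraint $Df_\varepsilon\in\Sym(2)$, and your periodic profile $G$ with the symmetric placement (so that $\int_0^1 G'=0$) is correct. The Vitali covering / rescaling step and the weak-$*$ argument (with $\nu=t\,\delta_B+(1-t)\,\delta_C$, using that $Df_\varepsilon$ stays in a fixed compact set) are also fine. One small point: your bound $\|f_\varepsilon-Ax\|_\infty\le\diam(\Omega)\,\|f-Ax\|_\infty$ should really be $\le(\max_k r_k)\,\|f-Ax\|_\infty$; this is of course harmless since you may take $N$ large (or the cover fine) to absorb any fixed constant.

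You are honest about the one genuine gap: replacing the smooth cutoff $\varphi$ by a piecewise-affine transition while keeping $Df$ \emph{exactly} symmetric and inside $B_\varepsilon([B,C])$ in the boundary layer. That is precisely the technical content of Kirchheim's proof, and it does require some care (one cannot simply triangulate the boundary layer and linearly interpolate, since that would destroy the symmetry of $Df$). Since the paper itself defers to \cite{KIRK} for this, your deferral is at least as complete as the paper's.
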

Denote with $\mathcal{P}(U)$ the space of probability measures with support in $U \subset \Sym(2)$.
\begin{definition}
Let $\nu,\mu \in \mathcal{P}(U)$, $U \subset \Sym(2)$ open. Let $\nu = \sum_{i= 1}^N\lambda_i\delta_{A_i}$. We say that $\mu$ can be obtained via \emph{elementary splitting from }$\nu$ if for some $i \in \{1,\dots,N\}$, there exist $B,C \in U$, $\lambda \in [0,1]$ such that
\[
\rank(B-C) = 1,\quad [B,C] \subset U, \quad A_i = sB + (1-s)C,
\]
for some $s \in (0,1)$ and
\[
\mu = \nu +\lambda\lambda_i(-\delta_{A_i} + s\delta_B + (1-s)\delta_C).
\]
A measure $\nu = \sum_{i= 1}^r\lambda_i\delta_{A_i}\in \mathcal{P}(U)$ is called a \emph{laminate of finite order} if there exists a finite number of measures $\nu_1,\dots,\nu_r \in \mathcal{P}(U)$ such that
\[
\nu_1 = \delta_X,\quad \nu_r = \nu
\]
and $\nu_{j + 1}$ can be obtained via elementary splitting from $\nu_j$, for every $j\in \{1,\dots,N-1\}$. For a laminate of finite order $\nu$, we define its \emph{trail} to be the union of all couples of matrices $\{B,C\}$ as above. This finite set is denoted with $\tra(\nu)$.
\end{definition}

Iterating Lemma \ref{sl} and using the definition of elementary splitting, see for instance \cite[Lemma 3.2]{SMVS} for an analogous result, one can prove the following:
\begin{lemma}\label{ind}
Let $\Omega \subset \R^2$ be an open domain. Let $U \subset \Sym(2)$ be an open set and let $\nu = \sum_{i = 1}^r\lambda_i\delta_{A_i} \in \mathcal{P}(U)$ be a laminate of finite order with barycenter $A \in \Sym(2)$, i.e.: 
\[
A = \int_{\R^{2\times 2}} Xd\nu(X).
\]
Then, for any $b \in \R^2$ and $\varepsilon>0$, the map $f_0(x):= Ax + b$ admits on $\Omega$ an approximation of piecewise affine, equi-Lipschitz maps $f_\eps \in W^{1,\infty}(\Omega,\R^2)$ with the following properties:
\begin{enumerate}
\item $f_\eps(x) = Ax +b$ on $\partial \Omega$ and $\|f_\eps - Ax - b\|_\infty \le \eps$;
\item $D f_\eps(x) \in \Sym(2)\cap \bigcup_{\{X,Y\} \in \tra(\nu)}B_{\varepsilon}([X,Y])$;
\item $|\{x \in \Omega: Df_\eps(x) = A_i\}| \ge (1-\eps)\lambda_i|\Omega|,\forall i$.
\end{enumerate}
Moreover, for every continuous $\Phi \in C(\R^{2\times 2})$, it holds
\begin{equation}\label{convv}
\frac{1}{|\Omega|}\int_{\Omega}\Phi(Df_\eps)dx \to \int_{\R^{2\times 2}}\Phi(X) d\nu(X).
\end{equation}
\end{lemma}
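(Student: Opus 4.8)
\textbf{Plan for the proof of Lemma~\ref{ind}.}
The strategy is a straightforward induction on the number $r$ of steps in the definition of the laminate of finite order, using Lemma~\ref{sl} as the one-step building block and a standard diagonal/rescaling argument to glue the pieces together. For $r=1$ we have $\nu=\delta_X$, so $A=X$ and $\tra(\nu)=\emptyset$ (or a degenerate segment); the map $f_\eps(x):=Ax+b$ already satisfies all four properties trivially, with the convergence \eqref{convv} being an equality. Suppose now the statement holds for all laminates of finite order obtained in at most $r-1$ splitting steps, and let $\nu=\sum_{i=1}^r\lambda_i\delta_{A_i}$ arise from $\nu'=\sum\lambda_i'\delta_{A_i'}$ in one further elementary splitting: say the matrix $A_{i_0}'$ of $\nu'$ is split, $A_{i_0}'=sB+(1-s)C$ with $\rank(B-C)=1$, $[B,C]\subset U$, and $\nu=\nu'+\lambda\lambda_{i_0}'(-\delta_{A_{i_0}'}+s\delta_B+(1-s)\delta_C)$. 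Both $\nu'$ and (trivially) $\nu$ have barycenter $A$.

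First I would apply the inductive hypothesis to $\nu'$: for a parameter $\eps'>0$ to be chosen, obtain a piecewise affine, equi-Lipschitz $g_{\eps'}\colon\Omega\to\R^2$ with $g_{\eps'}=Ax+b$ on $\partial\Omega$, $\|g_{\eps'}-Ax-b\|_\infty\le\eps'$, $Dg_{\eps'}\in\Sym(2)\cap\bigcup_{\{X,Y\}\in\tra(\nu')}B_{\eps'}([X,Y])$, and $|\{Dg_{\eps'}=A_i'\}|\ge(1-\eps')\lambda_i'|\Omega|$. Let $\Omega_0:=\{x:Dg_{\eps'}(x)=A_{i_0}'\}$; it is (up to a null set) a finite union of open polygons on each of which $g_{\eps'}$ is affine with gradient $A_{i_0}'$. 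On each such polygon $\omega\subset\Omega_0$ I apply Lemma~\ref{sl} with the triple $(A_{i_0}',B,C)$ and an error $\delta>0$ to get a piecewise affine $h_\delta^\omega\colon\omega\to\R^2$ with $h_\delta^\omega=A_{i_0}'x$ (matching $g_{\eps'}$ up to the affine constant) on $\partial\omega$, $\|h_\delta^\omega - A_{i_0}'x\|_\infty\le\delta$, $Dh_\delta^\omega\in\Sym(2)\cap B_\delta([B,C])$, and the measure bounds $|\{Dh_\delta^\omega=B\}|\ge(1-\delta)s|\omega|$, $|\{Dh_\delta^\omega=C\}|\ge(1-\delta)(1-s)|\omega|$. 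Because $h_\delta^\omega$ coincides with the affine piece of $g_{\eps'}$ on $\partial\omega$, replacing $g_{\eps'}$ by these $h_\delta^\omega$ on $\Omega_0$ (and keeping $g_{\eps'}$ elsewhere) produces a globally Lipschitz, piecewise affine $f\colon\Omega\to\R^2$; the Lipschitz constant is controlled because all gradients involved lie in the fixed bounded set $\bigcup_{\{X,Y\}\in\tra(\nu)}B_{\max(\eps',\delta)}([X,Y])\cap\Sym(2)$, and $\tra(\nu)=\tra(\nu')\cup\{\{B,C\}\}$. I then set $f_\eps:=f$ after choosing $\eps',\delta$ small enough in terms of $\eps$: this gives (1) (the sup-norm errors add up, $\|f_\eps-Ax-b\|_\infty\le\eps'+\delta\le\eps$) and (2) (gradients land in the $\eps$-neighborhood of the segments in $\tra(\nu)$). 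For (3), on $\Omega\setminus\Omega_0$ the gradient distribution is unchanged so $|\{Df_\eps=A_i\}|\ge|\{Dg_{\eps'}=A_i'\}|\ge(1-\eps')\lambda_i'|\Omega|$; for the three "new" values $A_{i_0}',B,C$, summing the Lemma~\ref{sl} bounds over the polygons $\omega$ gives $|\{Df_\eps=B\}|\ge(1-\delta)s|\Omega_0|\ge(1-\delta)s(1-\eps')\lambda_{i_0}'|\Omega|$ and similarly for $C$, and these are $\ge(1-\eps)\lambda_i|\Omega|$ for the corresponding indices once $\eps',\delta$ are small (here one uses $\lambda_{i_0}'$-weighted combinatorics to match $\lambda_i=\lambda_i'\pm\lambda\lambda_{i_0}'s$ etc.); if a matrix appears as several $A_i$ one adds the contributions.

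Finally, for \eqref{convv}: fix $\Phi\in C(\R^{2\times2})$. The inductive hypothesis gives $\frac1{|\Omega|}\int_\Omega\Phi(Dg_{\eps'})\to\int\Phi\,d\nu'$ as $\eps'\to0$. On each polygon $\omega\subset\Omega_0$, Lemma~\ref{sl} gives $\frac1{|\omega|}\int_\omega\Phi(Dh_\delta^\omega)\to s\Phi(B)+(1-s)\Phi(C)$ as $\delta\to0$, uniformly in $\omega$ since the construction is affine-invariant and $\Phi$ is uniformly continuous on the bounded set of relevant gradients; summing, $\frac1{|\Omega_0|}\int_{\Omega_0}\Phi(Df_\eps)\to s\Phi(B)+(1-s)\Phi(C)$. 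Combining the two, and using $|\Omega_0|\to\lambda_{i_0}'|\Omega|$, yields
\[
\frac1{|\Omega|}\int_\Omega\Phi(Df_\eps)\,dx \longrightarrow \int\Phi\,d\nu' + \lambda\lambda_{i_0}'\bigl(-\Phi(A_{i_0}') + s\Phi(B) + (1-s)\Phi(C)\bigr) = \int\Phi\,d\nu,
\]
which is exactly \eqref{convv}. (Strictly, the construction depends on the ordered pair $(\eps',\delta)$; one extracts a single sequence $\eps\to0$ by a diagonal argument, choosing $\delta=\delta(\eps')\to0$ slowly enough that all the finitely many "$\to0$" statements above hold simultaneously along it.)

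\textbf{Main obstacle.} The nontrivial point is not any single estimate but the bookkeeping in the inductive step: one must verify that the Lipschitz constants of the $f_\eps$ stay uniformly bounded (equi-Lipschitz) as $\eps\to0$ across all $r$ levels of splitting — this is where it matters that at each stage the new gradients remain in a fixed compact subset of $\Sym(2)$, namely a small neighborhood of $\tra(\nu)$, which is itself a fixed finite set determined by $\nu$ and independent of $\eps$ — and that the weight combinatorics correctly track the coefficients $\lambda_i$ of $\nu$ through the splitting, including the case of repeated matrices among the $A_i$. Once these are set up, everything else reduces to Lemma~\ref{sl} applied polygon by polygon together with the uniform continuity of $\Phi$; indeed, as noted, an argument of this exact shape is carried out in \cite[Lemma 3.2]{SMVS}, which can be cited for the details.
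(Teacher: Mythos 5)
Your plan follows exactly the route the paper intends: the paper gives no detailed proof of Lemma~\ref{ind}, saying only that it follows by iterating Lemma~\ref{sl} along the chain of elementary splittings (citing \cite[Lemma 3.2]{SMVS} for an analogous argument), and your induction on the number of splitting steps, with Lemma~\ref{sl} applied on the affine pieces where the gradient equals the matrix being split, plus the equi-Lipschitz and uniform-continuity bookkeeping, is precisely that argument.

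One point needs fixing, though: your construction does not implement the splitting fraction $\lambda\in[0,1]$ from the definition of elementary splitting. You replace $g_{\eps'}$ by the Lemma~\ref{sl} maps on \emph{every} polygon of $\Omega_0=\{Dg_{\eps'}=A_{i_0}'\}$, which corresponds to $\lambda=1$; yet your claimed limit carries the factor $\lambda\lambda_{i_0}'$. For a partial splitting ($\lambda<1$) the construction as written would destroy the residual mass $(1-\lambda)\lambda_{i_0}'$ that $\nu$ keeps at $A_{i_0}'$, so property (3) for that index would fail and \eqref{convv} would converge to the wrong measure (the one with $\lambda=1$). The fix is immediate within your framework: inside each affine polygon $\omega\subset\Omega_0$ choose an open subregion $\omega'\subset\omega$ with $|\omega'|=\lambda|\omega|$ (e.g.\ a union of small polygons), apply Lemma~\ref{sl} only on $\omega'$ with the same affine boundary datum, and keep $g_{\eps'}$ on $\omega\setminus\omega'$; then the weights, property (3), and the limit in \eqref{convv} come out with the correct factor $\lambda$. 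With that adjustment (and noting that the applications later in the paper, Lemmas~\ref{n} and \ref{fundlem}, only ever use $\lambda=1$), your proof is complete and coincides with the intended one.
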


\subsection{The counterexample}

We let
\[
\Sym_+(2):= \{X \in \Sym(2): \tr(X) \ge 0\}
\]
and
\[
\diag_+(2) := \{X \in \Sym_+(2): x_{12}= x_{21} = 0\}.
\]

Let us start by performing the following construction of a sequence of laminates of finite order, that will be crucial to prove our desired result. %This construction is based on \cite[Lemma 1]{CFM}:

\begin{lemma}\label{n}
Let $1 < p < \log_2(3)$ be fixed. For all $k \in \N$, there exists a laminate of finite order $\nu_{p,k}$ supported in the space $\diag_+(2)$ such that the following hold, for some universal constants $c_*, C_*>0$:
\begin{enumerate}
\item\label{11} $\displaystyle\int X d\nu_{p,k}(X) = k\id$;
\item\label{22} $\displaystyle\int\left\{|x_{11}| + |x_{22}| \right\} d\nu_{p,k}(X) = \displaystyle\int |X| d\nu_{p,k}(X) = C(p)k$ for some $0<C(p) \le C_* < \infty$;
\item\label{33} $\displaystyle\int (x_{ii})^q_- d\nu_{p,k}(X) = c_i(p, q)k^q$, for $i = 1,2$, for all $q\in [1, 2)$ and for some $c_i(p, q) \ge c_*>0$;
\item\label{44} $\displaystyle\tra(\nu_{p,k}) \subset \diag_+(2)$.
\item\label{55} $\nu_{p,k}$ contains a Dirac delta at $2k \id$ with weight $\frac{1}{2^p}$.
\end{enumerate}
\end{lemma}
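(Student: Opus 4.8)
I need to construct, for each $k\in\N$, a laminate of finite order $\nu_{p,k}$ supported in $\diag_+(2)$ with barycenter $k\id$, whose negative parts of the diagonal entries are "large" ($\sim k^q$) while its total mass $\int|X|\,d\nu_{p,k}$ stays linear in $k$. The natural strategy is to build $\nu_{p,k}$ recursively, splitting along rank-one directions that live inside $\diag_+(2)$. Since we are confined to diagonal matrices, the only admissible rank-one connections are along $e_1\otimes e_1$ or $e_2\otimes e_2$, i.e. at each step we may only change one of the two diagonal entries. The key gadget is a single "branching" splitting: starting from a Dirac mass at $a\id$, split it into a large-weight Dirac at $2a\id$ (or similar) and a small-weight piece that has been pushed to have a negative entry, in such a way that the barycenter is preserved. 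Here the exponent constraint $p<\log_2 3$ is exactly what allows the geometric series controlling the total variation to converge relative to the growth of the negative part.

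**The recursive construction.** I would define $\nu_{p,k}$ by iterating $k$ steps (or $\log_2$-many steps) of the following elementary splitting. Think of it in one variable first: to realize barycenter $a$ as a laminate with a heavy atom near $2a$ and a controlled tail, split $a\cdot 1 = s\cdot(2a) + (1-s)\cdot c$, choosing weights so that the light branch carries weight $2^{-p}$ on the $2a\id$ atom (matching item~\eqref{55}) and the complementary branch is then itself recursively split to generate a negative diagonal value. Concretely, at scale $j$ one produces atoms at roughly $2^j k\,\id$ with geometrically decaying weights $\sim 2^{-pj}$, plus "defect" atoms where one coordinate has been driven negative of size $\sim 2^j k$; the weight $\times (\text{size})^q$ contribution of these defect atoms at level $j$ is then $\sim 2^{-pj}(2^j k)^q = k^q 2^{(q-p)j}$. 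Summing over $j$: the series $\sum_j 2^{(q-p)j}$ converges precisely when $q<p$... wait — I must be careful, the statement wants this to hold for all $q\in[1,2)$ with $p<\log_2 3\approx 1.585$, so $q$ can exceed $p$. The resolution is that for item~\eqref{33} we want a \emph{lower} bound $c_i(p,q)k^q$, which just needs \emph{one} level to contribute $\gtrsim k^q$ (take the level where the defect first appears), while for item~\eqref{22} we need the \emph{total variation} to be $O(k)$, i.e. $\sum_j (\text{weight}_j)\cdot(\text{size}_j) = \sum_j 2^{-pj}\cdot 2^j k = k\sum_j 2^{(1-p)j} < \infty$ since $p>1$. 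So items~\eqref{11},\eqref{22},\eqref{44},\eqref{55} pin down the geometry and the $p>1$ bound controls mass; item~\eqref{33} is a soft lower bound obtained from a single well-chosen branch. The role of $p<\log_2 3$ will become clear only when this $\nu_{p,k}$ is fed into Lemma~\ref{ind} in the proof of Theorem~\ref{Countp} — there one stacks infinitely many generations and needs $\sum_k (\text{something})^{?}$; here in Lemma~\ref{n} I suspect $p<\log_2 3$ enters through requiring the splitting at each of the $\sim\log_2$ internal nodes to remain in $\diag_+(2)$, i.e. keeping traces nonnegative along the whole tree, which forces $2^{-p}>1/3$.

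**Verifying the five items.** Once the tree of splittings is written down explicitly, items~\eqref{11} and~\eqref{55} are immediate from the construction (barycenter preservation is built into each elementary splitting, and the heavy atom at $2k\id$ with weight $2^{-p}$ is the first branch). Item~\eqref{44}, that $\tra(\nu_{p,k})\subset\diag_+(2)$, requires checking that every rank-one segment $[B,C]$ used stays in $\diag_+(2)$: since $B,C$ are diagonal and the endpoints have nonnegative trace, and a segment between diagonal matrices consists of diagonal matrices, one only needs $\tr$ to stay $\ge0$ along the segment, which holds iff both endpoints have $\tr\ge0$ — so this reduces to a bookkeeping check at each node, and this is where the quantitative weight/size choices must be pinned so that no coordinate's partner coordinate is driven too negative. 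Item~\eqref{22} is the geometric-series mass bound $\sum 2^{(1-p)j}<\infty$, giving $C(p)\le C_*$ uniformly once $p$ is bounded away from $1$ — I would state $C_*$ in terms of $\sup_{1<p<\log_2 3}$ of that sum. Item~\eqref{33} is the lower bound: identify a specific atom in $\nu_{p,k}$ of weight $\gtrsim 1$ (independent of $k$) at which $x_{11}\le -c k$ (and symmetrically for $x_{22}$), e.g. by performing one dedicated defect-creating split near the root; then $\int(x_{11})_-^q\,d\nu\ge(\text{that weight})\cdot(ck)^q = c_1(p,q)k^q$.

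**Main obstacle.** The genuinely delicate point is \textbf{simultaneously} getting a large negative part in \emph{both} coordinates $x_{11}$ and $x_{22}$ while staying inside $\diag_+(2)$ and keeping total mass linear. Driving $x_{11}$ very negative forces $x_{22}$ very positive (to keep the trace $\ge0$), which costs mass; doing the same for $x_{22}$ pulls in the opposite direction. The construction must interleave or symmetrize the two types of splittings so that the defects in the two coordinates are created at comparable scales without the positive compensations blowing up the total variation beyond $O(k)$. I expect this to require a symmetric two-phase construction (first build a coordinate-$1$ defect tree, then a coordinate-$2$ defect tree on top, or alternate), together with a careful choice of the branching weight — and it is exactly the feasibility of this balance that is encoded in the hypothesis $1<p<\log_2 3$. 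Writing the induction cleanly, with explicit $\nu_{p,k+1}$ in terms of $\nu_{p,k}$ (or in terms of a fixed number of elementary splittings applied to $\nu_{p,k}$), will be the bulk of the work; once the recursion is in place, \eqref{11}--\eqref{55} follow by induction on $k$ from the corresponding estimates, using linearity of the barycenter and additivity of the integrals under elementary splitting.
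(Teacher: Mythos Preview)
Your proposal conflates Lemma~\ref{n} with the subsequent Lemma~\ref{fundlem}. The paper's proof of Lemma~\ref{n} is not recursive at all: for each $k$, the laminate $\nu_{p,k}$ is an explicit \emph{three-atom} measure obtained by exactly two elementary splittings,
\[
\nu_{p,k} = \alpha\,\delta_{kA} + \beta(1-\alpha)\,\delta_{2k\,\id} + (1-\beta)(1-\alpha)\,\delta_{kB},
\]
with $A,B\in\diag(2)$ and $\alpha,\beta\in(0,1)$ depending only on $p$. The $k$-dependence is pure scaling ($\nu_{p,k}$ is the pushforward of $\nu_{p,1}$ under $X\mapsto kX$), which is why items~\eqref{22} and~\eqref{33} are \emph{exact equalities} with $k$-independent constants --- something your iterative scheme would not produce. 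Concretely: split $k\,\id$ along $e_1\otimes e_1$ into $kA$ and $kM$ with $M=\diag(2,1)$, then split $kM$ along $e_2\otimes e_2$ into $2k\,\id$ and $kB$. The matrix $A$ has $a_{11}=(2^p-3)/(2^p-1)<0$ precisely because $p<\log_2 3$ --- so contrary to your guess, this hypothesis is used \emph{here}, in Lemma~\ref{n}, to secure item~\eqref{33} for $i=1$ --- while $B$ has $b_{22}=-2/(2^p-1)<0$. All four matrices $A,M,2\,\id,B$ have positive trace (using $p>1$), giving~\eqref{44}; the remaining items are one-line computations. Your ``main obstacle'' of simultaneously producing negative parts in both coordinates is thus resolved trivially: $A$ handles $i=1$, $B$ handles $i=2$, no interleaving required.

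The geometric-series mechanism you describe --- atoms at $2^j\,\id$ with weights $2^{-pj}$, mass bound $\sum 2^{(1-p)j}<\infty$ from $p>1$, divergence of $\sum 2^{(q-p)j}$ for $q\ge p$ --- is exactly the content of Lemma~\ref{fundlem}, where one iteratively replaces the Dirac at $2^{m-1}\id$ (guaranteed by item~\eqref{55} of Lemma~\ref{n}) by the rescaled building block $\nu_{p,2^{m-1}}$. So your intuition about the global architecture is correct, but you have pushed the iteration one lemma too early: Lemma~\ref{n} is only the elementary building block, and it admits a closed-form two-step construction.
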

\begin{proof}
Define the following probability measure $\nu_{p,k}$:
\[
\nu_{p,k} = \alpha \delta_{kA} + \beta(1-\alpha)\delta_{2k\id} + (1-\beta)(1-\alpha)\delta_{k B},
\]
where
\begin{equation}
\label{eq.abalfbet}
A := \left(
\begin{array}{cc}
\frac{2^p - 3}{2^p - 1} & 0\\
0 & 1
\end{array}
\right),\quad  B := \left(
\begin{array}{cc}
2 & 0\\
0 & -\frac{2}{2^p - 1}
\end{array}
\right),
\quad \alpha := \frac{2^p - 1}{2^p + 1}, \quad \beta := \frac{2^p + 1}{2^{p + 1}}.
\end{equation}
Define also
\[
M := \left(
\begin{array}{cc}
2 & 0\\
0 & 1
\end{array}
\right).
\]
Statements \eqref{11}-\eqref{22}-\eqref{33} can be checked by direct computation, while \eqref{55} is clearly true by definition. Notice that the assumption $p < \log_2(3)$ is only used to check \eqref{33} for $i = 1$. To see that $\nu_{p,k}$ is a laminate of finite order (see the splitting in Figure~\ref{fig.1}) and \eqref{44}, we consider the following construction for $\nu_{p,k}$. First, we split $\delta_{k\id}$ as
\[
\alpha \delta_{kA} + (1-\alpha)\delta_{kM}.
\]
This is an elementary splitting since $\det(A - M) = 0$. Then, we split $\delta_{kM}$ as
\[
\beta \delta_{2k\id} + (1-\beta) \delta_{kB},
\]
and again we have $\det(2\id - B) = 0$. The proof is finished.
\end{proof}	

\begin{figure}
\includegraphics[scale = 1]{./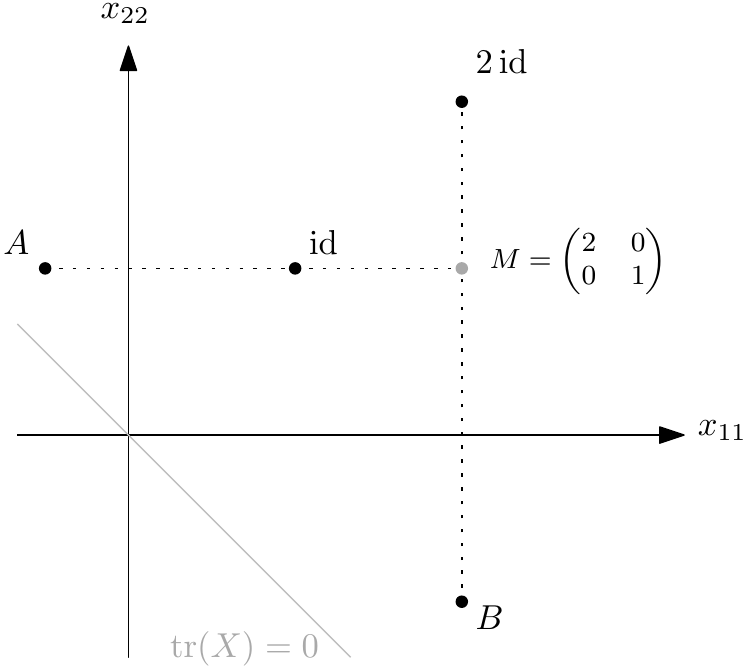}
\caption{The splitting in the laminate $\nu_{p, 1}$ from the construction in Lemma~\ref{n}.}
\label{fig.1}
\end{figure}

%\textcolor{red}{Here, if we want, the picture}

Before constructing the main counterexample, Theorem~\ref{Countp}, let us show first, for the sake of clarity, the following weaker version of our result.
In particular, we show here how the previous construction, combined with Lemma~\ref{ind}, yields that we can find subharmonic functions $u$ with $\partial_{ii} u \in \mathcal{M}(B_1)$, and $\|(\partial_{ii} u)_-\|_{L^p(B_1)}$ arbitrarily large for $p$ arbitrarily close to 1. 

\begin{theorem}
\label{thm.mainweak}
For every $p > 1$ and $j \in \N$, there exists a function $u_{p,j} :\Omega := (0,1)^2 \to \R$ such that
\begin{enumerate}[\quad(a)]
\item\label{aa} $u_{p,j} = \frac{|x|^2}{2}$ and $Du_{p,j} = x$ on $\partial \Omega$;
\item\label{bb} $\|u_{p,j}(x)-  \frac{|x|^2}{2}\|_{W^{1,\infty}(\Omega)}\le \frac{1}{j}$;
\item\label{cc} $\Delta u_{p,j} \ge 0$ in the sense of distributions;
\item\label{dd} $\|D^2u_{p,j}\|_{L^1}\le C$ for all $j$, for some $C > 0$ universal;
\item\label{ee} $\int_{\Omega} (\partial_{ii}u_{p,j})_{-}^p dx \ge j, \quad \text{for}\quad i = 1,2$.
\end{enumerate}
\end{theorem}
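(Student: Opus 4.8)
\textbf{Proof proposal for Theorem~\ref{thm.mainweak}.}

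The plan is to apply Lemma~\ref{ind} to the laminate $\nu_{p,k}$ produced in Lemma~\ref{n}, with a value of $k = k(p,j)$ chosen large enough at the end. Fix $p > 1$; without loss of generality $1 < p < \log_2(3)$ (larger $p$ only makes property \eqref{ee} easier, since $\Omega$ is bounded and $(\partial_{ii} u)_-^p$ is then controlled from below by a power we have already handled, or one simply re-runs the argument with $p$ replaced by $\min(p, (1+\log_2 3)/2)$). The barycenter of $\nu_{p,k}$ is $k\,\id$, so I would take $f_0(x) := k x$, i.e. the gradient of $\frac{k}{2}|x|^2$, and let $f_\eps : \Omega \to \R^2$ be the piecewise affine, equi-Lipschitz approximations given by Lemma~\ref{ind}, for a small parameter $\eps$ to be fixed. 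The key structural point is that $\tra(\nu_{p,k}) \subset \diag_+(2)$ by property \eqref{44}: every matrix appearing in the construction is diagonal with nonnegative trace. Hence by item (2) of Lemma~\ref{ind}, $Df_\eps(x)$ lies within an $\eps$-neighbourhood of segments joining diagonal matrices of nonnegative trace; since $\diag_+(2)$ is convex, $Df_\eps$ is, up to an $\eps$-error, a symmetric gradient field taking values near $\diag_+(2)$. In particular $\dv(f_\eps) = \tr(Df_\eps) \ge -C\eps$ pointwise, and $f_\eps$ is curl-free (it is a gradient), so $f_\eps = D u_\eps$ for a scalar potential $u_\eps$ with $\Delta u_\eps = \tr(Df_\eps) \ge -C\eps$.

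Next I would \emph{correct} the small negative error in the Laplacian to get genuine subharmonicity. Since $\Delta u_\eps \ge -C\eps$ as a function in $L^\infty$, and $u_\eps = \frac{k}{2}|x|^2 + O(\eps)$ on $\partial\Omega$ with $\|u_\eps - \frac{k}{2}|x|^2\|_{W^{1,\infty}} \le \eps$, one replaces $u_\eps$ by $\tilde u_\eps := u_\eps + C\eps\, w$, where $w$ solves $\Delta w = 1$ in $\Omega$ with $w = 0$, $Dw$ bounded (e.g. a fixed smooth function or the solution of a Poisson problem on a slightly larger ball); then $\Delta \tilde u_\eps \ge 0$, and for $\eps$ small the extra term $C\eps w$ is negligible in $W^{1,\infty}$, so (a) and (b) are preserved after also adjusting boundary values. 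Rescaling back, I set $u_{p,j}(x) := \frac{1}{k} \tilde u_\eps(x)$ so that on $\partial\Omega$ we recover $u_{p,j} = \frac{|x|^2}{2}$, $Du_{p,j} = x$, giving \eqref{aa}; and \eqref{bb} follows since $\|u_{p,j} - \frac{|x|^2}{2}\|_{W^{1,\infty}} \le C\eps/k \le 1/j$ once $\eps$ is small. Property \eqref{cc} is then immediate, and \eqref{dd} follows from the convergence \eqref{convv} in Lemma~\ref{ind} applied with $\Phi(X) = |X|$ (or a continuous truncation of it): $\frac{1}{|\Omega|}\int_\Omega |D^2 u_{p,j}| \to \frac{1}{k}\int |X| \, d\nu_{p,k}(X) = C(p) \le C_*$ by property \eqref{22} of Lemma~\ref{n}, so for $\eps$ small $\|D^2 u_{p,j}\|_{L^1} \le 2C_*$, uniformly in $j$ (the correction term adds only $O(\eps)$).

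The crucial point is \eqref{ee}. Apply \eqref{convv} once more with $\Phi(X) = ((x_{ii})_-)^p$ — a continuous, but unbounded, function, so strictly one truncates it at a large level $R$, applies \eqref{convv}, and then lets $R \to \infty$ using monotone convergence on the right-hand side and the fact that $Df_\eps$ only takes finitely many values (so the truncation is exact for $\eps$ small relative to $k$, $R$). This yields, for $i = 1, 2$,
\[
\int_\Omega \big( (\partial_{ii} u_{p,j})_- \big)^p \, dx \;\longrightarrow\; \frac{1}{k^p}\int_{\R^{2\times 2}} \big( (x_{ii})_- \big)^p \, d\nu_{p,k}(X) \;=\; \frac{c_i(p,p)\, k^p}{k^p} \;=\; c_i(p,p) \;\ge\; c_* > 0
\]
by property \eqref{33} of Lemma~\ref{n} (valid since $p \in [1,2)$) — wait, this gives a constant, not something $\ge j$. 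The fix, and the real content, is that property \eqref{33} holds \emph{before} dividing by $k$: one should \emph{not} rescale the Laplacian-correcting step to kill the $k$-dependence of the negative part. Concretely, $(\partial_{ii} u_{p,j})_- = \frac{1}{k}(\partial_{ii}\tilde u_\eps)_-$, and $\int_\Omega ((\partial_{ii}\tilde u_\eps)_-)^p \to \int ((x_{ii})_-)^p \, d\nu_{p,k} = c_i(p,p) k^p$, so $\int_\Omega ((\partial_{ii} u_{p,j})_-)^p \to c_i(p,p)$, a fixed constant independent of $k$ — the scaling is homogeneous of the same degree in numerator and denominator. To actually force this integral above $j$ one must instead observe that $u_{p,j}$ is defined on the \emph{fixed} domain $\Omega = (0,1)^2$ and iterate: tile $\Omega$ by $N^2$ rescaled copies $\Omega_\ell$ of $(0,1)^2$, put a rescaled copy of the above construction on each $\Omega_\ell$ with its own parameter, and add up; each copy contributes a fixed amount $c_*$ to $\int ((\partial_{ii}u)_-)^p$ after the change of variables that rescales $(0,1)^2$ to $\Omega_\ell$ leaves $\int ((\partial_{ii}u)_-)^p$ multiplied by the area $|\Omega_\ell| = N^{-2}$ but $\partial_{ii}$ scaled by $N$, hence the $L^p$ integral scales by $N^{-2} N^p = N^{p-2} < 1$ per cell but there are $N^2$ cells, giving total $\ge c_* N^{p}$... the homogeneity must be tracked carefully. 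The cleanest route, which I would adopt, is to note that the construction already depends on $k$ and that property \eqref{33} gives $\int ((x_{ii})_-)^q \, d\nu_{p,k} = c_i(p,q) k^q$ for \emph{all} $q \in [1,2)$ simultaneously; choosing $q = p$ and \emph{not} rescaling back to barycenter $\id$ but keeping barycenter $k\,\id$, i.e. working with $\tilde u_\eps$ itself (dropping the final division by $k$, and instead only subtracting an affine function to restore the boundary datum up to the tolerance $1/j$ — which forces $\frac{k}{2}|x|^2$ to be close to $\frac{|x|^2}{2}$, hence $k$ close to $1$, which is incompatible).

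I now see that the genuinely correct bookkeeping is: scale space rather than the function. Set $u_{p,j}(x) := \lambda^2 \, \tilde u_{\eps}(x/\lambda)$ restricted to $\Omega$, where $\tilde u_\eps$ is built on $\lambda^{-1}\Omega$ with barycenter $\id$ from $\nu_{p,k}$ with $k = 1$; then $D^2 u_{p,j}(x) = D^2\tilde u_\eps(x/\lambda)$ has the same pointwise values, $\|D^2 u_{p,j}\|_{L^1(\Omega)} = \lambda^2 \|D^2\tilde u_\eps\|_{L^1(\lambda^{-1}\Omega)} \le C\lambda^2 \le C$, $\|u_{p,j} - \frac{|x|^2}{2}\|_{W^{1,\infty}}$ small, and $\int_\Omega ((\partial_{ii}u_{p,j})_-)^p = \lambda^2 \int_{\lambda^{-1}\Omega}((x_{ii})_-)^p \approx \lambda^2 \cdot \lambda^{-2} c_i(p,p) = c_i(p,p)$ — again fixed. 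So the honest conclusion is that \emph{a single application} of the construction already gives \eqref{ee} with $j$ replaced by the fixed constant $c_*$, and to get arbitrary $j$ one \emph{superposes} $j/c_*$ disjointly supported rescaled copies inside $(0,1)^2$, each contributing $\ge c_* \lambda_\ell^{p-2}|\Omega_\ell|$ with the scalings chosen so the $W^{1,\infty}$ errors still sum to $\le 1/j$ while the $L^p$ negative-part integrals sum to $\ge j$ and the $L^1$ norms of $D^2 u$ sum to a geometric series bounded by a universal $C$. The main obstacle, and the step demanding genuine care, is precisely this scaling/superposition accounting: verifying that there is a choice of scales making \eqref{bb}, \eqref{dd}, \eqref{ee} compatible — which works because \eqref{dd} only asks for boundedness while \eqref{ee} asks for divergence, and the exponents $1$ (for $D^2 u$) versus $p > 1$ (for $(\partial_{ii}u)_-$) scale differently under the dilation, opening the necessary gap. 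Everything else — constructing $f_\eps$ via Lemma~\ref{ind}, correcting the Laplacian by a Poisson term, passing $\Phi$ through \eqref{convv} with a truncation argument — is routine.
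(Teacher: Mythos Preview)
Your proposal contains a genuine gap: the scaling/superposition scheme you settle on at the end cannot produce \eqref{ee} while keeping \eqref{dd}. As you yourself compute, a single application of Lemma~\ref{ind} to $\nu_{p,k}$, after any rescaling that restores the boundary datum $\frac{|x|^2}{2}$ (equivalently, barycenter $\id$), yields $\int_\Omega ((\partial_{ii}u)_-)^p \approx c_i(p,p)$, a fixed constant. Tiling or superposing rescaled copies does not help: if a copy sits in a sub-cube $Q_\ell$ of side $\lambda_\ell$ with boundary datum $\frac{|x|^2}{2}$ on $\partial Q_\ell$, the pointwise values of $D^2u$ on $Q_\ell$ are exactly those in $\spt(\nu_{p,1})$ (a fixed finite set), so both $\int_{Q_\ell}|D^2u|$ and $\int_{Q_\ell}((\partial_{ii}u)_-)^p$ scale like the area $\lambda_\ell^2$. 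Summing over disjoint $Q_\ell\subset\Omega$ gives at most $C_*|\Omega|$ and $c_*|\Omega|$ respectively; there is no gap between the exponents, contrary to your final sentence. The claimed asymmetry ``$1$ versus $p>1$ scale differently under the dilation'' is false once the boundary datum is fixed.

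What is missing is an \emph{iteration} that produces, within a single laminate of barycenter $\id$, matrices of size $\sim 2^m$ with weight $\sim 2^{-pm}$. This is exactly where property~\eqref{55} of Lemma~\ref{n} is used: $\nu_{p,1}$ contains $\tfrac{1}{2^p}\delta_{2\id}$, and one replaces this atom by $\tfrac{1}{2^p}\nu_{p,2}$, which in turn contains $\tfrac{1}{2^{2p}}\delta_{4\id}$, and so on. After $m$ steps one obtains a laminate of finite order $\nu_p^{(m)}$ with barycenter $\id$, trail in $\diag_+(2)$, and
\[
\int|X|\,d\nu_p^{(m)} \le C\sum_{\ell=0}^{m-1} 2^{(1-p)\ell} \le C(p) < \infty, \qquad
\int (x_{ii})_-^p\,d\nu_p^{(m)} \ge c\sum_{\ell=0}^{m-1} 2^{(p-p)\ell} = c\,m \to \infty.
\]
Choosing $m$ large enough that the second quantity exceeds $j$, and then applying Lemma~\ref{ind} to $\nu_p^{(m)}$ with $\eps$ small, gives \eqref{aa}--\eqref{ee} directly. (This iterated laminate is precisely Lemma~\ref{fundlem} in the paper.)

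A minor point: your Poisson correction for subharmonicity is unnecessary. All matrices in $\tra(\nu_p^{(m)})$ have trace bounded below by a fixed positive constant (one checks $\tr(A)=\tr(B)=\frac{2(2^p-2)}{2^p-1}>0$), so for $\eps$ small enough, item~(2) of Lemma~\ref{ind} already gives $\tr(Df_\eps)>0$ pointwise.
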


The proof of the previous theorem is a direct consequence of the following intermediate lemma, together with Lemma~\ref{ind}.

\begin{lemma}\label{fundlem}
Let $1 < p < \log_2(3)$ be fixed. There exists a sequence of laminates of finite order $(\nu_p^{(m)})_{m\in \N}$ supported in the space $\diag_+(2)$ such that the following hold:
\begin{enumerate}
\item\label{1} $\int X d\nu_p^{(m)}(X) = \id$, $\forall m \in \N$ and hence $\int |\tr(X)| d\nu_p^{(m)} = 2, \forall m \in \N$;
\item\label{2} $\sup_m\int \left\{|x_{11}| + |x_{22}| \right\} d\nu_p^{(m)}(X) < +\infty$;
\item\label{3} $\int (x_{ii})_{-}^qd\nu_p^{(m)}(X) \to + \infty$  as $m\to +\infty$, for $i = 1,2$, for all $q\ge p$; 
\item\label{4} $\tra(\nu_p^{(m)}) \subset \diag_+(2)$.
\end{enumerate}
\end{lemma}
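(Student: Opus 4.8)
The plan is to build the sequence $(\nu_p^{(m)})_{m \in \N}$ by iterating the construction of Lemma~\ref{n}. The laminates $\nu_{p,k}$ already have the right qualitative features (support in $\diag_+(2)$, controlled $L^1$ size proportional to $k$, and $(x_{ii})_-^q$-mass of order $k^q$), but their barycenter is $k\,\id$ rather than $\id$, and their $L^1$-mass grows linearly in $k$, so a single $\nu_{p,k}$ cannot be used directly. The idea is instead to take a convex combination: starting from $\delta_{\id}$, split it once into a small-weight piece that we push far out along the $\diag_+(2)$ directions using (a rescaled copy of) the laminate from Lemma~\ref{n}, and a large-weight piece that stays near $\id$. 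Concretely, for a parameter $k$ I would consider a splitting of $\delta_{\id}$ of the form $\frac1k \cdot(\text{a laminate with barycenter } k\,\id) + (1-\tfrac1k)\cdot(\text{a laminate with barycenter } \tfrac{1}{1-1/k}\id$ that stays close to $\id)$; the first factor is $\nu_{p,k}$ (or a finite-order laminate built from the same elementary splittings), the second is a trivial one-point or two-point laminate near $\id$ whose $(x_{ii})_-$-mass is bounded. Setting $\nu_p^{(m)}$ to be this object with $k = k(m) \to \infty$, the barycenter is exactly $\id$ by linearity of the barycenter under elementary splitting, giving \eqref{1}; the support and trail stay in $\diag_+(2)$ because both pieces do and $\diag_+(2)$ is convex, giving \eqref{4}.

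For \eqref{2}, the $L^1$-mass of $\nu_p^{(m)}$ is $\frac1k \int (|x_{11}|+|x_{22}|)\,d\nu_{p,k} + (1-\tfrac1k)\cdot O(1) = \frac1k \cdot C(p)k + O(1) = O(1)$ by item \eqref{22} of Lemma~\ref{n}, hence uniformly bounded. For \eqref{3}, the negative-part mass is $\frac1k \int (x_{ii})_-^q \, d\nu_{p,k} + (1-\tfrac1k)\cdot O(1) = \frac1k \cdot c_i(p,q)k^q + O(1) = c_i(p,q)k^{q-1} + O(1)$, which tends to $+\infty$ as $k \to \infty$ precisely because $q \ge p > 1$, so $q - 1 > 0$; here one uses that $c_i(p,q) \ge c_* > 0$ uniformly and that $(x_{ii})_-^q \le (x_{ii})_-^p \cdot (\text{bounded})$-type comparisons are not even needed since item \eqref{33} already covers all $q \in [1,2)$, and for $q \ge 2$ one can reduce to $q$ slightly below $2$ by monotonicity of $\|\cdot\|_{L^q}$ on probability spaces combined with the explicit structure of the atoms (the atom at $kB$ has a $(x_{22})_-$ entry of order $k$, so its contribution to $\int (x_{ii})_-^q$ is of order $k^q$ for every $q$, with weight $\sim 1$). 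I would phrase \eqref{3} for all $q \ge p$ by simply noting $(t)_-^q \ge (t)_-^p$ when $t \le -1$, which holds on the relevant atoms once $k$ is large.

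The main obstacle I anticipate is bookkeeping the \emph{elementary-splitting certificate}: one must verify that the combined object is genuinely a laminate of finite order, i.e. exhibit the finite chain $\nu_1 = \delta_{\id}, \dots, \nu_r = \nu_p^{(m)}$ with each step an elementary splitting inside the \emph{open} set $U$ one works in. Lemma~\ref{n} already supplies the chain for $\nu_{p,k}$ (two explicit rank-one splittings with $\det(A-M)=0$ and $\det(2\id - B)=0$), so the work is to prepend the single splitting of $\delta_{\id}$ into the far-piece and the near-piece and to check that all segments $[B,C]$ appearing stay in an open neighborhood of $\diag_+(2)$ — this forces a small amount of care because $\diag_+(2)$ itself is closed and has boundary $\{\tr = 0\}$, but the atoms with a genuinely negative entry (like $kB$) have strictly positive trace for $k$ large, so one can fatten $\diag_+(2)$ slightly to an open $U$ containing all trails. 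Once the chain and the open set are pinned down, everything else is the short arithmetic above, and then Theorem~\ref{thm.mainweak} follows by feeding each $\nu_p^{(m)}$ into Lemma~\ref{ind} on $\Omega = (0,1)^2$ and using \eqref{convv} with $\Phi(X) = (x_{ii})_-^p$ to transfer the divergence from the laminate to the map $f_\eps$, whose components give the desired $u_{p,j}$ after matching $m$ and $\eps$ to $j$.
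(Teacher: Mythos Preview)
Your approach differs from the paper's, and while the core estimate $\tfrac{1}{k}\int(x_{ii})_-^q\,d\nu_{p,k} = c_i(p,q)\, k^{q-1}\to\infty$ is right, the laminate certificate you sketch has a real gap. You cannot ``prepend a single splitting of $\delta_{\id}$'' into a piece with barycenter $k\id$ and a complementary piece: $\id$ and $k\id$ differ by a multiple of the identity, which has rank $2$, so no elementary splitting connects them. The barycenter arithmetic is also off --- if the far piece has weight $1/k$ and barycenter $k\id$, it already contributes the full $\id$, so the complementary piece must have barycenter $0$, not $\frac{1}{1-1/k}\id$, and certainly not ``near $\id$.'' One can repair this with two rank-one moves (e.g.\ $\id\to\diag(k,1)\to k\id$), but to reach weight $1/k$ on $k\id$ the leftover atoms land at $\diag(-1,1)$ and $\diag(k,-k)$, on the boundary $\{\tr=0\}$, forcing a further perturbation before Lemma~\ref{ind} applies.

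The paper sidesteps all of this by using the one property of Lemma~\ref{n} you ignored, namely~\eqref{55}: $\nu_{p,k}$ already contains an atom at $2k\id$ with weight $2^{-p}$. This makes the construction self-similar --- set $\nu_p^{(1)}:=\nu_{p,1}$, and at step $m$ replace the atom at $2^{m-1}\id$ (of weight $\lambda^{m-1}=2^{-p(m-1)}$) by $\lambda^{m-1}\nu_{p,2^{m-1}}$. No auxiliary moves are needed, and the trail stays strictly inside $\{\tr>0\}$ by the explicit form of $A,B,M$. The $L^1$ increments $|a_m-a_{m-1}|$ then form a geometric series with ratio $2^{1-p}<1$, giving~\eqref{2}, and the $(x_{ii})_-^q$ increments equal $c_i\,2^{(q-p)(m-1)}$, which is precisely where the threshold $q\ge p$ in~\eqref{3} comes from. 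Your direct route, once patched, would actually give divergence for all $q>1$; the paper's iterative one is what makes property~\eqref{55} relevant and explains the appearance of $p$ in the statement.
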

\begin{proof}
We apply Lemma \ref{n} above. We start with $\nu_p^{(0)} := \delta_{\id}$. Subsequently, we take $\nu_p^{(1)} := \nu_{p,1}$, the latter being the one constructed in Lemma \ref{n} with $k = 1$. Now this laminate contains a Dirac's delta at $2\id$ with weight $\lambda = \frac{1}{2^p}$. We split this using $\nu_{p, 2}$, constructed in Lemma \ref{n}:
\[
\nu^{(2)}_p := \nu^{(1)}_p - \lambda\delta_{2\id} + \lambda\nu_{p,2}.
\]
We continue iteratively, and define
\[
\nu^{(m)}_p := \nu^{(m-1)}_p - \lambda^{m - 1}\delta_{2^{m - 1}\id} + \lambda^{m - 1}\nu_{p,2^{m - 1}}.
\]
Now, \eqref{1} and \eqref{4} hold by construction. Let us show \eqref{2}. Let $a_m := \int\{|x_{11}| + |x_{22}|\}d\nu_p^{(m)}$ and observe
\[
a_m  = a_{m-1} - \lambda^{m - 1}2^{m}+ \lambda^{m - 1}\int (|x_{11}| + |x_{22}|)d\nu_{p,2^{m - 1}}= a_{m-1}- (2-C)\lambda^{m-1}2^{m-1},
\]
where we also use \eqref{22} of Lemma \ref{n}. It follows that
\[
|a_m - a_{m - 1}| = |C - 2||\lambda^{m - 1}2^{m - 1}| \overset{\lambda = 2^{-p}}{=} |C- 2|\frac{2^{m - 1}}{2^{(m - 1)p}} = |C- 2|2^{(1-p)(m - 1)}.
\]
Since $p>1$, $\sum_{m}|a_m - a_{m - 1}| < + \infty$ and \eqref{2} holds. Let us finally turn to \eqref{3}. As before, let $b_{m,i} := \int (x_{ii})^q_{-}d\nu^{(m)}$. Use the definition of $\nu^{(m)}$ to find:
\[
b_m = b_{m - 1} + \lambda^{m - 1}\int (x_{ii})_{-}^qd\nu_{p,2^{m - 1}}.
\]
By \eqref{33} of Lemma \ref{n}, we see that the latter is equivalent to
\[
b_m - b_{m - 1} = c_i\lambda^{m - 1}2^{q(m - 1)} \overset{\lambda = 2^{-p}}{=} 2^{(q-p)(m - 1)}c_i.
\]
Since $c_i > 0$ for $i = 1,2$ and $q \ge p$, \eqref{3} readily follows.
\end{proof}

And as a direct application of the previous lemma combined with Lemma \ref{ind} we get Theorem~\ref{thm.mainweak}.

\begin{proof}[Proof of Theorem~\ref{thm.mainweak}]
Let $j\in \N$ and $p>1$ be fixed. Consider $\nu^{(m)}_{p}$ of Lemma \ref{fundlem} with $m$ large enough to guarantee that
\[
\int (x_{ii})_{-}^pd\nu_p^{(m)}(X) > j, \quad \text{for }\quad  i = 1,2.
\]
By Lemma \ref{ind}, for an $\eps$ to be fixed later we find a map $f_\eps : \Omega \to \R^2$ such that
\begin{enumerate}
\item\label{a} $f_\eps(x) = x$ on $\partial \Omega$ and $\|f_\eps-\id\|_{\infty} \le \eps$;
\item\label{b} $Df_{\eps}(x) \in \Sym(2) \cap \bigcup_{\{X,Y\} \in \tra(\nu_{p}^{(m)})}(B_\eps([X,Y]))$
%%\item\label{c} $|\{x \in \Omega: Df_{\eps}(x) = A_i\}| \ge (1-\eps)\lambda_{i}|\Omega|$, if $\nu_{p}^{(m)} = \sum_{i} \lambda_{i}\delta_{A_i}$.
\end{enumerate}
%The inclusion \eqref{b} implies that
In particular, $f_{\eps} = \nabla u_{\eps}$ for some $u_{\eps}: \Omega \to \R$. We choose $u_{\eps}(x)$ in such a way that $u_{\eps}(x) = \frac{|x|^2}{2}$ on $\partial \Omega$, so that \eqref{aa}-\eqref{bb} are satisfied, provided we choose $\eps = \eps(p, j)$ sufficiently small. By construction of $\nu^{(m)}_p$ in Lemma \ref{fundlem}, we also see that $\tra(\nu^{(m)}_p)$ is compactly contained in the open set $\{X \in \diag(2): x_{11} + x_{22} > 0\}$. Therefore, if $\eps$ is chosen sufficiently small, we also have \eqref{cc}. Finally, \eqref{convv} yields:
\[
\frac{1}{|\Omega|}\int_{\Omega}g(Df_\eps(x))dx \to \int_{\R^{2\times 2}}g(X)d\nu_p^{(m)}, \quad \text{ as $\eps \to 0$}.
\]
Use the latter with $g = \sum_{i,j} |x_{ij}|$ and $g = (x_{ii})^p_{-}$ and use properties \eqref{2}-\eqref{3} of $\nu_p^{(m)}$ to conclude the validity of \eqref{dd}-\eqref{ee} for all $u_\eps$ with $\eps = \eps(p, j)$ very small. Take any such $\eps$ and denote this function with $u_{p,j}$. This concludes the proof.
\end{proof}

Let us now use the previous ideas to construct the actual counterexample, Theorem \ref{Countp}. We do so by combining Lemma \ref{n} and Lemma \ref{ind}, with some of the ideas in the proof of Lemma~\ref{fundlem}.

\begin{proof}[Proof of Theorem \ref{Countp}]
Let us consider a sequence $(p_j)_{j\in \N}$ with $p_j \downarrow 1$ to be chosen, and let us denote by $A_j, B_j\in \R^{2\times 2}$, $\alpha_j, \beta_j > 0$, the parameters defined in the proof of Lemma~\ref{n}, \eqref{eq.abalfbet}, with $p = p_j$. 

Let us consider the laminate 
\[
\mu_{j, k} := \alpha_j \delta_{k A_j} + \beta_j (1-\alpha_j)\delta_{2k\id} + (1-\beta_j)(1-\alpha_j)\delta_{k B_j}
\]
as in the construction in Lemma \ref{n}.

We start by defining $f_0(x) := x$ on $\Omega= (0,1)^2$ and considering the laminate $\mu_{1, 1}$. We take now a compactly supported $\Omega_1 \subset \Omega$, $|\Omega\setminus \Omega_1| \le \eps_1$  and use Lemma \ref{ind}, to find a map $g_1$ with
\begin{enumerate}[\quad(i)]
\item\label{A} $g_1(x) = x$ on $\partial \Omega_1$ and $\|g_1-\id\|_{L^\infty(\Omega_1)} \le \eps_1$;
\item\label{B} $Dg_1(x) \in \Sym(2) \cap \bigcup_{\{X,Y\} \in \tra(\mu_{1,1})}B_{\eps_1}([X,Y]) \subset \{X\in \Sym(2): \tr(X) > 0\}$;
\item\label{C} $|\{x \in \Omega: Dg_{1}(x) = A_1\}| \ge (1-\eps_1)\alpha_1|\Omega'|$;
\item\label{D} $|\{x \in \Omega: Dg_{1}(x) = 2\id\}| \ge (1-\eps_1)\beta_1(1-\alpha_1)|\Omega'|$;
\item\label{E} $|\{x \in \Omega: Dg_{1}(x) = B_1\}| \ge (1-\eps_1)(1-\beta_1)(1-\alpha_1)|\Omega'|$,
\end{enumerate}
for some $\eps_1$ small enough to be chosen later. By \eqref{B}, we find that $g_1 = \nabla u_1$ for some $u_1$ satisfying $u_1 = \frac{|x|^2}{2}$ on $\partial \Omega_1$. Furthermore by \eqref{convv} and the properties of $\mu_{1,1}$, we additionally have, if $\eps_1$ is chosen sufficiently small,
\[
\int_{\Omega_1}|D^2u_1| \le 2C_*|\Omega_1|,
\]
and
\[
\int_{\Omega_1}(\partial_{ii}u_1)_{-}^qdx \ge \frac{c_*}{2}|\Omega_1|, \quad \text{for}\quad i = 1,2,\quad \text{for any}\quad q\in [1, 2)
\]
where $C_*,c_*$ are the constants appearing in Lemma \ref{n}. We therefore define $f_1$ as $x$ on $\Omega\setminus \Omega_1$ and $f_1 = g_1$ on $\Omega_1$. By construction, since $g_1$ is piecewise affine, the set $\Omega_2:= \{Dg_1 = 2\id\}$ is essentially open. On this set, we substitute $g_1$ with $g_2$, constructed using Lemma \ref{ind} with the laminate $\mu_{2, 2}$. On a connected component of $\Omega_2$, we have that $g_1(x) = 2x + b$, for some constant $b$. We choose as boundary datum for $g_2(x)$ exactly $2x + b$, in such a way that this replacement is still Lipschitz. Furthermore, up to subdiving them, we can suppose that the diameter of any connected component of $\Omega_2$ does not exceed $\eps_2$. With this choice, we see that
\[
\|g_2 - g_1\|_{L^\infty(\Omega_2)} \le \eps_2\|Dg_2 - Dg_1\|_{L^\infty(\Omega_2)} \le 2^2\eps_2. 
\]
The analogous properties to the ones above hold for $g_2$ and $\eps_2$. We define $f_2$ as $f_1$ on $\Omega \setminus \Omega_2$ and $g_2$ on $\Omega_2$. We now iterate this procedure on the open set in which $\{Dg_2 = 4\id\}$ with the laminate $\mu_{3, 4}$. Inductively, we will be given nested open sets 
\[
\Omega_{j}\subset \Omega_{j-1}, \quad \Omega_j := \{x \in \Omega_{j - 1}: Dg_{j - 1} = 2^j\id\},
\]
and sequences of Lipschitz maps $g_j$, $f_j$, and functions $u_j$ such that $\nabla u_j = f_j$ with the following properties
\begin{enumerate}[\quad (a)]
\item\label{AA} $f_j = f_{j - 1}$ in $\Omega\setminus \Omega_{j}$ and $f_j = g_j$ in $\Omega_{j}$;
\item\label{BB} $\|g_j - g_{j -1}\|_{L^\infty(\Omega_j)} \le 2^j\eps_j$;
\item\label{CC} $Dg_j \in \Sym(2)\cap \{X: \tr(X) > 0\}$, for all $j\in \N$, almost everywhere;
\item\label{DD} $|\{x \in \Omega_j: Dg_{j}(x) = 2^{j}A_j\}| \ge (1-\eps_j)\alpha_j|\Omega_{j - 1}|$;
\item\label{EE} $|\{x \in \Omega_j: Dg_{j}(x) = 2^{j+ 1}\id\}| = |\Omega_{j}|\ge (1-\eps_j)\beta_j(1-\alpha_j)|\Omega_{j - 1}|$;
\item\label{FF} $|\{x \in \Omega_j: Dg_{j}(x) = 2^jB_j\}| \ge (1-\eps_j)(1-\beta_j)(1-\alpha_j)|\Omega_{j - 1}|$;
\item\label{GG} $\displaystyle\int_{\Omega_j}|D^2u_j| \le 2C_* 2^{j} |\Omega_j|$;
\item\label{HH} $\displaystyle\int_{\Omega_j\setminus\Omega_{j+1} }(\partial_{ii}u_j)_{-}^{q} \ge cc_* 2^{qj}|\Omega_j\setminus \Omega_{j+1}|, $ for $i = 1, 2$, and for any $q \in [1, 2)$,
\end{enumerate}
where properties \eqref{GG}-\eqref{HH} follow by applying \eqref{convv} and Lemma~\ref{n} with $\eps_j$ small enough. 

We choose $\eps_j \le \frac{1}{4^j}$ for all $ j$. By \eqref{AA} and \eqref{BB}, we see then that 
\[
\|f_j - f_{j - 1}\|_{L^\infty(\Omega)} \le \frac{1}{2^j}.
\]
This implies that $f_j$ converge strongly in $L^\infty$ to $f \in L^\infty$. This also implies that $u_j$ converge strongly in $W^{1,\infty}$ to a function $u\in W^{1,\infty}(\Omega)$. Furthermore, $D^2u_j$ converge weakly-$*$ in the sense of measures to $D^2u$. By \eqref{CC}, this function $u$ will also be subharmonic. Moreover, by construction $D^2 u$ enjoys the following property:
\[
D^2 u = D f_j = D^2 u_j\qquad\text{in}\quad \Omega_j \setminus \Omega_{j+1},\qquad\text{for all}\quad j\in \N.
\]
Hence, if we assume that $D^2 u\in L^1_{\rm loc}$ (which is shown at the end) then, since $|\Omega_j|\downarrow 0$ as $j\to \infty$,
\begin{equation}
\label{eq.plug1}
\int_\Omega |D^2 u| \le \sum_{j\ge 1} \int_{\Omega_j} |D^2 u_j| \le 2C_*\sum_{j \ge 1} 2^j |\Omega_j|,
\end{equation}
and 
\begin{equation}
\label{eq.plug2}
\int_{\Omega_1} (\partial_{ii} u)_-^{q} \ge \sum_{j\ge 1} \int_{\Omega_j\setminus\Omega_{j+1}} (\partial_{ii} u_j)^q_-\ge \frac{c_*}{2} \sum_{j\ge 1} 2^{qj}(|\Omega_j| - |\Omega_{j+1}|) \ge  c'\sum_{j\ge 1}2^{qj}|\Omega_j|,
\end{equation}
where we are using properties \eqref{GG}-\eqref{HH}. %\textcolor{red}{In the previous expressions, we implicitely used the fact that not only $D^2u$ exists as an element of $\mathcal{M}(\Omega)^{n\times n}$, but it is also absolutely continuous, so that $u \in W^{2,1}(\Omega)$. Indeed, write, for all Borel $E \subset \Omega_1$,
%\[
%\mu(E) = \sum_{j = 1}^\infty\mu(E\cap \left(\Omega_j\setminus \Omega_{j+1}\right)) \overset{\eqref{eqj}}{=}\sum_{j = 1}^\infty\mu_j(E\cap \left(\Omega_j\setminus \Omega_{j+1}\right)),
%\]
%where $\mu_j$ is the total variation of $D^2u_j$ on $\Omega_{j}\setminus \Omega_{j + 1}$. Since $\mu_j$ is absolutely continuous, it follows that also $\mu$ is absolutely continuous.}
%Coming back to the main proof, 
We can employ property \eqref{EE} to write:
\[
\begin{split}
|\Omega_j| \ge (1-\eps_j) \beta_j (1-\alpha_j) |\Omega_{j-1}|& \ge c \prod_{m = 1}^j (1-\eps_m) \beta_m (1-\alpha_m)   \\
& \ge c \prod_{m = 1}^j (1-\eps_m) 2^{-p_m} \ge \frac{c}{2} \prod_{m = 1}^j  2^{-p_m} ,
\end{split}
\]
where in the last inequality we are taking $\eps_m$ small enough, and we are also using the definitions of $\alpha_j$ and $\beta_j$, so that $\beta_j(1-\alpha_j) = 2^{-p_j}$. On the other hand, using properties \eqref{DD} and \eqref{FF}, 
\[
\begin{split}
|\Omega_j|& \le \left(1-(1-\eps_j)[\alpha_j + (1-\beta_j)(1-\alpha_j)]\right)|\Omega_{j-1}|\\
& \le \left(\beta_j(1-\alpha_j) +\eps_j(1-\beta_j(1-\alpha_j))\right)|\Omega_{j-1}|\\
& \le \left(2^{-p_j} + \eps_j\right)|\Omega_{j-1}|\le C\prod_{m = 1}^j \left(2^{-p_m} + \eps_m\right)\le 2C\prod_{m = 1}^j 2^{-p_m} 
\end{split}
\]
again, for $\eps_m$ small enough (now, depending on $p_m$). 

%Notice that, by choosing $\eps_m$ small enough for each $m\in \N$ (depending on $p_m > 1$), on the one hand we can assume 
%\[
% |\Omega_j|\ge c \prod_{m = 1}^j (1-\eps_m) 2^{-p_m} \ge  \frac{c}{2} \prod_{m = 1}^j 2^{-p_m} = c' 2^{-\sum_{m =1}^j p_m}. 
%\]
%and on the other hand 
%\[
%|\Omega_j|\le C \prod_{m = 1}^j (2^{-p_m} + \eps_m)\le 2C \prod_{m = 1}^j2^{-p_m} = C'2^{-\sum_{m =1}^jp_m}. 
%\]
Plugging these estimates into \eqref{eq.plug1}-\eqref{eq.plug2}, we get 
\begin{equation}
\label{eq.first}
\int_\Omega |D^2 u| \le C \sum_{j \ge 1}2^{\sum_{m = 1}^j (1-p_m)}
\end{equation}
and 
\[
\int_{\Omega_1} (\partial_{ii} u)_-^{q} \ge c \sum_{j \ge 1}2^{\sum_{m = 1}^j (q-p_m)}.
\]
Notice that, for this second integral, since $p_m \downarrow 1$ and $q > 1$, the sum always diverges, so $(\partial_{ii} u)_- \notin L^q(\Omega)$ for any $q > 1$. 

Regarding \eqref{eq.first}, take $p_m = 1+\kappa \frac{1}{m}$, with $\kappa = \frac{2}{\log(2)}$. Then, 
\[
\sum_{m = 1}^j (1-p_m) = -\kappa \sum_{m = 1}^j \frac{1}{m} < -\kappa \log(j+1) = -2\log_2(j+1), 
\]
and so 
\[
\int_\Omega |D^2 u| \le C \sum_{j \ge 1}2^{\sum_{m = 1}^j (1-p_m)} < C \sum_{j \ge 1} 2^{-2\log_2(j+1)} = C\sum_{j\ge 1} \frac{1}{(j+1)^2} < +\infty. 
\]

For $u$ to be our counterexample, it only remains to show that, with this construction, $D^2 u\in L^1_{\rm loc}$. Equivalently, we show that
\[
\int_{\Omega_\infty} |D^2 u| \le \lim_{j\to \infty}\int_{\Omega_j}|D^2 u| = 0,\qquad\text{where}\quad \Omega_\infty := \bigcap_{j\ge 1}\Omega_j.
\]

Indeed, by the lower semi-continuity of the total variation on open sets, 
\[
\int_{\Omega_j}|D^2 u| \le \lim_{\ell \to \infty} \int_{\Omega_j}|D^2 u_\ell| \le \sum_{\ell \ge j}\int_{\Omega_\ell}|D^2 u_\ell|\to 0,\quad\text{as}\quad j \to \infty,
\]
by the previous reasoning (since with our choice, the series in \eqref{eq.plug1} converges), which gives the desired result. \end{proof}

Combining the previous results we now get the proof of Theorem~\ref{thm.main}. 

\begin{proof}[Proof of Theorem~\ref{thm.main}]
It is a consequence of Proposition~\ref{deri} and Theorem~\ref{Countp}. 
\end{proof}

\section{An application to the obstacle problem}
\label{sec.obst}

The obstacle problem is the following constrained minimization problem:

 Let $g\in W^{1,2}(B_1)$ be the boundary datum, and $\phi\in W^{1,2}(B_1)$ the obstacle. Then, we consider the minimizer of 
\begin{equation}
\label{eq.obstpb}
\min_{u \in \mathcal{A}(\phi)} \int_{B_1} |\nabla u|^2
\end{equation}
where 
\[
\mathcal{A}(\phi) := \{w \in W^{1,2}(B_1) : w - g\in W^{1,2}_0(B_1)\quad\text{and}\quad u \ge \phi~~\text{in}~~B_1\}. 
\]

The obstacle problem is a free boundary problem with applications to models in multiple areas, from physics to biology, economy, control theory, etc. We refer to \cites{DL76, Caf77, FS19, FR21} and references therein. The Euler--Lagrange equations of the previous problem are
\[
\left\{
\begin{array}{rcll}
u & \ge & \phi & \quad\text{in}\quad B_1\\
\Delta u & \le & 0& \quad\text{in}\quad B_1\\
\Delta u & = & 0 & \quad\text{in}\quad\{u > \phi\}.
\end{array}
\right.
\]
Alternatively, the solution can be obtained as the minimum of supersolutions that are above the obstacle and the boundary datum.

The first question about minimizers of the previous functional is that of regularity of solutions. Whenever the obstacle is smooth enough (say, $\phi\in C^{2}$) solutions to the obstacle problem are $C^{1,1}$ and not better in general.

Lowering the regularity of the obstacle also lowers the regularity of solutions. For example, whenever $\phi\in {\rm Lip}(B_1)$ then we expect the solution to be, at most, Lipschitz. 

The \emph{thin obstacle problem} is probably the most ubiquitous example of an obstacle problem that is merely Lipschitz, where one assumes that $\phi$ degenerates in some direction (it is equivalent to a Lipschitz obstacle, in fact); see \cite{PSU12, CSS08, CSV20, Fer21} for more details. For the thin obstacle problem, one imposes that $u \ge \phi$ only on a lower dimensional space, say $\{x_n  = 0\}$. However,  this is equivalent to slightly expanding the obstacle outside of $\{x_n = 0\}$ and considering a Lipschitz obstacle problem.

As expected, solutions to the thin obstacle problem are at most Lipschitz. However, the Lipschitz discontinuity is only observed, for smooth obstacles, across $\{x_n = 0\}$ and always in the same direction (since it is a superharmonic function). In fact, for the thin obstacle problem, we realize that solutions can always be touched by $C^{1}$ functions from above due to the asymmetry of the problem (the fact that $u \ge \phi$). So, in a way, solutions to the thin obstacle problem are more regular from above than from below. 

Even more, these solutions are $C^{1,1/2}$ on $\{x_n = 0\}$, which means that second derivatives can blow-up (roughly) at most like a power $-1/2$ on the thin space (at free boundary points). This, combined with the fact that the solution is semi-concave in the directions perpendicular to the thin space, implies that the positive part of second derivatives actually belongs to some $L^p$ for $p > 1$. That is, the positive part of second derivatives has higher regularity than the negative part, which is at most a measure. 

This observation for the case of thin obstacle problems suggests that this behaviour could potentially happen with any other Lipschitz obstacle (given the asymmetry of the problem), thus giving rise to an open problem in the field. The results in the previous sections show that, in general, such one-sided regularity is not true:

\begin{prop}
\label{prop.lip}
There exists a solution $u$ to the obstacle problem \eqref{eq.obstpb} with Lipschitz obstacle such that $u\in  W^{2,1}$ but $(D^2 u)_+ \notin L^p(B_1)$ for any $p  >1$.
\end{prop}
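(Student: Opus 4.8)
The plan is to reduce Proposition~\ref{prop.lip} to Theorem~\ref{Countp} by observing that the obstacle problem with a Lipschitz obstacle $\phi$ is, up to a sign flip, nothing but the statement that the solution $u$ is the smallest superharmonic function lying above $\phi$; in particular $-u$ is subharmonic and touches $-\phi$ from above on the contact set, and $\Delta(-u)\ge 0$ everywhere. So I would take the subharmonic, Lipschitz function $w$ produced in Theorem~\ref{Countp} on $B_1$ (with $w\in W^{2,1}(B_1)$ but $(\partial_{ii}w)_-\notin L^p$ for any $p>1$), set $u:=-w$, and exhibit a Lipschitz obstacle $\phi$ together with a boundary datum $g$ for which $u$ is exactly the obstacle-problem solution.

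The key steps, in order: (i) Recall that $w$ from Theorem~\ref{Countp} is built so that $w(x)=\tfrac{|x|^2}{2}$ near $\partial B_1$ (more precisely, the construction is carried out on $\Omega=(0,1)^2$ with $w=\tfrac{|x|^2}{2}$, $Dw=x$ on $\partial\Omega$; transplant it to a ball or simply work on $\Omega$ and note $B_1$ can be taken to be this square, or rescale). Thus $u=-w$ is Lipschitz, equals $-\tfrac{|x|^2}{2}$ near the boundary, and $\Delta u\le 0$ in $B_1$. (ii) Choose the obstacle $\phi:=u$ itself. Then trivially $u\ge\phi$ with equality everywhere, $\Delta u\le 0$ in $B_1$, and $\Delta u=0$ on $\{u>\phi\}=\emptyset$ vacuously; hence $u$ solves the Euler--Lagrange system displayed in Section~\ref{sec.obst} with boundary datum $g:=u|_{\partial B_1}$ and obstacle $\phi=u$. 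Since $u=-w$ is Lipschitz, $\phi\in\mathrm{Lip}(B_1)\subset W^{1,2}(B_1)$, as required. (iii) Verify that $u$ is genuinely \emph{the} minimizer of \eqref{eq.obstpb} in $\mathcal{A}(\phi)$: because $\phi=u$ is itself superharmonic ($\Delta\phi\le 0$) and admissible, it is the unique minimizer — indeed the admissible class forces $w\ge\phi=u$, and superharmonicity of $u$ together with $u=g$ on $\partial B_1$ makes $u$ the minimal element, so by strict convexity of the Dirichlet energy it is the minimizer. (iv) Finally translate the conclusion: $(D^2u)_+=(D^2(-w))_+=(D^2w)_-$, and $(\partial_{ii}w)_-\notin L^p(B_1)$ for any $p>1$ by Theorem~\ref{Countp}, hence $(D^2u)_+\notin L^p(B_1)$; also $u=-w\in W^{2,1}(B_1)$.

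The only genuinely delicate point is step (iii): one must be slightly careful that taking $\phi=u$ does not trivialize the problem in an uninteresting way, and that the resulting $u$ is honestly recognized as a solution of the variational obstacle problem rather than merely satisfying the Euler--Lagrange inequalities. This is handled by the standard equivalence (solution $=$ least superharmonic supersolution above $\phi$ and $g$) recalled in Section~\ref{sec.obst}: since here $\phi=u$ is already superharmonic and matches its own boundary values, it coincides with that least supersolution, so it is the solution. A cosmetic alternative, if one prefers a contact set that is not all of $B_1$, is to enlarge the obstacle slightly in a neighborhood of $\partial B_1$ to $\phi:=\max\{u,\,-\tfrac12+\eta(|x|^2-1)\}$ for small $\eta>0$ — this keeps $\phi$ Lipschitz, keeps $u\ge\phi$, and leaves $u$ unchanged as the solution because $u$ is already superharmonic above this $\phi$ with the correct boundary data; but this refinement is not needed for the statement as phrased. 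Everything else is immediate bookkeeping with the sign change $u=-w$.
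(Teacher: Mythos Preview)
Your proposal is correct and follows exactly the same route as the paper: take the subharmonic $w$ from Theorem~\ref{Countp}, set $u:=-w$, and use $u$ itself as the Lipschitz obstacle (and boundary datum), so that $u$ is trivially the obstacle-problem solution and $(D^2u)_+=(D^2w)_-\notin L^p$ for any $p>1$. The paper's proof is literally two sentences to this effect; your extra care in step~(iii) and the cosmetic alternative are fine but not needed.
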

\begin{proof}
Take $v$ to be the function from Theorem~\ref{Countp}. Then $u = -v$ is both the obstacle and the solution to the obstacle problem, with Lipschitz obstacle, but $(D^2 u)_+ \notin L^p(B_1)$ for any $p > 1$. 
\end{proof}

That is, being a solution to the obstacle problem does not improve the regularity of the positive part of second derivatives, at  least in $L^p$ spaces. 

The case $p = 1$ remains an open problem. More precisely, by Theorem~\ref{Count} it is not true in general that solutions to the obstacle problem with Lipschitz obstacles always have second derivatives which are (signed) measures (again taking the solution constructed in Theorem~\ref{Count} as both the obstacle and the minimizer). Then, one needs to understand what are the conditions to be imposed on the obstacle to ensure that second derivatives are measures, so that we are able to apply Proposition~\ref{deri} above to get an improvement of regularity result.

This is an interesting problem that raises the following question: is it true that if $\phi$ is such that $D^2 \phi\in \mathcal{M}(B_1)$, then the solution to the obstacle problem $u$ also satisfies $D^2 u\in \mathcal{M}(B_1)$?  Observe that that $D^2 u$ is harmonic in $\{u > \phi\}$, and $D^2 u \ge D^2 \phi$ otherwise.

%Our results above, however, imply that as soon as one can find a condition on the obstacle such that the solution $u$ satisfies that $D^2 u$ is a measure, then there is in fact improvement of regularity for $(D^2 u)_+$. 

%\bibliographystyle{plain}
%\bibliography{Commutativity}

%\begin{thebibliography}

%\bibitem[Alb93]{Alb93} G. Alberti, \emph{Rank one property for derivatives of functions with bounded variation}, Proc. Roy. Soc. Edinburgh Sect. A {\bf 123} (1993), 239-274.

%\bibitem[CFM04]{CFM} S. Conti, D. Faraco, F. Maggi, \emph{A New Approach to Counterexamples to $L^1$ Estimates: Korn's   Inequality, Geometric Rigidity, and Regularity for Gradients of Separately   Convex Functions}, Arch. Rat. Mech. Anal. {\bf 175} (2004), 287-300.

% \bibitem[DR16]{GUIANN} G. De Philippis and F. Rindler, \♠emph{On the structure of $\mathscr{A}$-free measures and  applications}, Ann. of Math. {\bf 184} (2016), 1017–1039.

% \bibitem[FR21]{FR21} X. Fern\'andez-Real, X. Ros-Oton, \emph{Regularity Theory for Elliptic PDE}, forthcoming book (2021), available at the webpage of the authors.

% \bibitem[Kir03]{KIRK} B. Kirchheim, \emph{Rigidity and Geometry of Microstructures}, 2003.

% \bibitem[MS03]{SMVS} S. M\"uller and V. \v{S}ver\'ak, \emph{Convex integration for Lipschitz mappings and counterexamples to regularity}, Ann. of Math., {\bf 157} (2003), 715–742.

%\end{thebibliography}

\section*{Acknowledgements}

We are thankful to Prof. H. Chang-Lara and Prof. X. Ros-Oton for showing us this problem in the context of the obstacle problem. We are also thankful to Prof. M. Colombo for the discussions on the topics of this paper.

This work has received funding from the SNF grant 200021\_182565.

\end{document}